  \def\m{{\mathfrak{m}}}\def\n{{\mathfrak{n}}}
\def\rank{{\mathrm{rank\;}}}
\DeclareMathOperator{\Soc}{Soc}
\def\q{^{[q]}}
\theoremstyle{plain}
\newtheorem{theorem}{Theorem}[section]
\newtheorem{corollary}[theorem]{Corollary}
\newtheorem{proposition}[theorem]{Proposition}
\newtheorem{conj}[theorem]{Conjecture}
\newtheorem{definition}[theorem]{Definition}
\newtheorem{discussion}[theorem]{Discussion}
\newtheorem{question}[theorem]{Question}
\newtheorem{remark}[theorem]{Remark}
\newtheorem*{acknowledgement}{Acknowledgment}
\newcommand{\bs}{\boldsymbol}
\begin{document}

\title{Bounds on the Hilbert-Kunz Multiplicity}
\date{April 18, 2011}

\author[Celikbas]{Olgur Celikbas}
\address{Department of Mathematics, University of Kansas,
Lawrence, KS 66045-7523, USA} \email{ocelikbas@math.ku.edu}

\author[Dao]{Hailong Dao}
\address{Department of Mathematics, University of Kansas,
Lawrence, KS 66045-7523, USA} \email{hdao@math.ku.edu}

\author[Huneke]{Craig Huneke}
\address{Department of Mathematics, University of Kansas,
Lawrence, KS 66045-7523, USA} \email{huneke@math.ku.edu}

\author[Zhang]{Yi Zhang}
\address{Department of Mathematics, University of Minnesota,
Minneapolis, MN 55455}
\email{zhang397@umn.edu}

\subjclass[2000]{Primary 13A35; Secondary 13B22, 13H15, 14B05}

\dedicatory{Dedicated to Paul Monsky}

\baselineskip 16pt \footskip = 32pt

\begin{abstract}In this paper we give new lower bounds on the Hilbert-Kunz multiplicity of
unmixed non-regular local rings, bounding them uniformly away from one. Our results
improve previous work of Aberbach and Enescu.
\end{abstract}

\maketitle \markboth{O.~Celikbas, H. Dao,  C.~Huneke, and Y. Zhang}
{Bounds on the Hilbert-Kunz multiplicity}

\section*{Introduction}

Let $(R,\m)$ be a $d$-dimensional Noetherian local ring of positive characteristic $p$. For
every $\m$-primary ideal $I$ in $R$, 
one can define an asymptotic invariant that measures the  singularity of
$R$ via the Frobenius powers of $I$. For $q$ a power of $p$, we let $I\q$ be the ideal generated by
the $q$th powers of all elements of $I$.  In 1983, Monsky \cite{M} proved that there is a
real number $e_{HK}(I) \geq 1$ such that for $q = p^e$,  
$$\ell(R/I\q) = e_{HK}(I)q^d + O(q^{d-1}).$$ 
When $I = \m$, we set $e_{HK}(\m) = e_{HK}(R)$. We use $\ell(\quad)$ to denote the length of a module,
and $\mu(\quad)$ to denote the minimal number of generators of a module throughout this paper. 

Later Huneke, McDermott and Monsky \cite{HMM} extended this result in the case $R$ is integrally closed and
excellent with perfect residue field
to show there is a constant $\beta$ such that
$$\ell(R/I\q) = e_{HK}(I)q^d + \beta q^{d-1}+ O(q^{d-2}).$$
See \cite{HoY} for further refinements.

The number $e_{HK}(R)$ is called the Hilbert-Kunz multiplicity of $R$. 
If $R$ is regular it is easy to see that $e_{HK}(R) = 1$.
The converse is true if $R$ is unmixed, but this is highly non-obvious. In fact, it was not until
2000 that Watanabe and Yoshida \cite{WY1} proved this fact. A somewhat simpler proof can be found in \cite{HY}. 

In this paper we are interested in giving lower bounds on the Hilbert-Kunz multiplicity. 
Let $e$ denote the usual Hilbert-Samuel multiplicity of
$R$. It is not hard to prove the following bounds:

$$\text{max}\{\frac {e}{d!},\, 1\}\leq e_{HK}(R)\leq e.$$
The last inequality, for example, follows at once from Lech's Lemma \cite[11.2.10]{SH}.

A natural way to think about the Hilbert-Kunz multiplicity is that as it nears $1$,
the singularities of the ring should be better.
Blickle and Enescu were able to prove an explicit version of this principle:

\medskip
\begin{theorem}[Blickle-Enescu, 2004 \cite{BE}]\label{b-e} Let $(R,\m,k)$ be an unmixed local ring of
prime characteristic $p > 0$.
\newline 1) If $e_{HK}(R) < 1 + \frac {1}{d!}$, then $R$ is Cohen-Macaulay and F-rational.
\newline 2) If $e_{HK}(R) < 1 + \frac {1}{p^dd!}$, then $R$ is regular.
\end{theorem}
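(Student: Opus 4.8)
The plan is to reduce to the case of a complete local domain, convert the failure of regularity --- or of F-rationality --- into the existence of a parameter ideal that is not tightly closed, and then read off the numerical bound from the Hilbert--Kunz analogue of Rees's theorem together with a Hilbert--Samuel multiplicity estimate. First I would pass to the completion, which changes neither $e_{HK}$ nor unmixedness and which preserves the Cohen--Macaulay, F-rational and regular properties, and then, by a flat residue field extension, assume $k$ is algebraically closed and in particular infinite. Using the associativity formula $e_{HK}(R)=\sum_{\mathfrak p\in\Min(R)}\ell(R_{\mathfrak p})\,e_{HK}(R/\mathfrak p)$, the bound $e_{HK}(R/\mathfrak p)\ge 1$ of Monsky, and the fact that unmixedness forces $\dim R/\mathfrak p=d$ for every $\mathfrak p\in\Min(R)$, we may assume $R$ has a single minimal prime $\mathfrak p$ with $\ell(R_{\mathfrak p})=1$ --- otherwise $e_{HK}(R)\ge 2\ge 1+\tfrac1{d!}$ and there is nothing to prove --- and then, since unmixedness makes every zerodivisor lie in $\mathfrak p$, the map $R\hookrightarrow R_{\mathfrak p}$ is injective with $R_{\mathfrak p}$ a field, so $R$ is a complete local domain.

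For Part 1 I would argue the contrapositive: assume $R$ is not Cohen--Macaulay or not F-rational, and aim for $e_{HK}(R)\ge 1+\tfrac1{d!}$. If $R$ is not Cohen--Macaulay, colon-capturing produces a partial system of parameters with $(x_1,\dots,x_i)\subsetneq\big((x_1,\dots,x_i):x_{i+1}\big)\subseteq(x_1,\dots,x_i)^{*}$, and after passing to an $\m$-primary situation the analysis parallels the Cohen--Macaulay case, so I will take $R$ Cohen--Macaulay and not F-rational. Then no parameter ideal is tightly closed, so a minimal reduction $J=(x_1,\dots,x_d)$ of $\m$ (available as $k$ is infinite) has $J^{*}\supsetneq J$; choosing a socle element of the finite-length module $J^{*}/J$ gives $u\in J^{*}\setminus J$ with $\m u\subseteq J$, and since $u\in\m$ we get $u^q\in\m^{[q]}$ for all $q$. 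The crucial input is the Hilbert--Kunz form of Rees's theorem (Hochster--Huneke): in a formally equidimensional local ring, $I\subseteq I'\subseteq I^{*}$ forces $e_{HK}(I)=e_{HK}(I')$. Applied to $J\subseteq J+uR\subseteq J^{*}$ it gives $e_{HK}(J+uR)=e_{HK}(J)=e(J)=e(R)$, the last two equalities because $J$ is a parameter ideal in a Cohen--Macaulay ring and a reduction of $\m$.

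It remains to bound $e_{HK}(\m)$ from below, and I expect this to be the crux. Two cases are straightforward: if $e(R)\ge d!+1$ then $e_{HK}(\m)\ge e(R)/d!\ge 1+\tfrac1{d!}$ already from the bound recalled in the introduction; and if $e(R)=2$ then $R/J$ has length $2$, so $\m^2\subseteq J$, hence $\m=J+uR\subseteq J^{*}$ and the Rees theorem gives $e_{HK}(\m)=e_{HK}(J)=2$. By Abhyankar's inequality $e(R)\ge\mu(\m)-d+1$ the only remaining values are $3\le e(R)\le d!$, which cannot occur at all when $d\le 2$. For these intermediate cases I would iterate the socle construction to produce a chain $J=J_0\subsetneq J_1\subsetneq\dots\subseteq J^{*}\subseteq\m$ with successive colengths $1$ and constant Hilbert--Kunz multiplicity, and would then need to estimate $\ell(\m^{[q]}/(J^{*})^{[q]})$. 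The obstacle is that the gap $e_{HK}(J)-e_{HK}(\m)$ is genuinely of order $q^d$, so it cannot simply be discarded; my approach would be to control it by comparing $\m^{[q]}$ with the Hilbert--Samuel ideal $\m^q$ --- which is exactly what manufactures the summand $\tfrac1{d!}$ --- and carefully bookkeeping the extra length forced by the relations $\m u\subseteq J$.

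For Part 2, assume $e_{HK}(R)<1+\tfrac1{p^dd!}$. As $1+\tfrac1{p^dd!}\le 1+\tfrac1{d!}$, Part 1 shows $R$ is Cohen--Macaulay and F-rational, hence a normal Cohen--Macaulay complete domain; suppose toward a contradiction that $R$ is not regular. Now F-rationality rules out the socle element of Part 1 even for the parameter ideal $J^{[p]}$, so the failure of regularity is only visible after one application of Frobenius, and the heuristic is that this is what replaces $1/d!$ by $1/(p^dd!)$. Concretely I would rerun the estimate of Part 1 with $\m^{[p]}$ (and the extra generators $y_i^p$ of $\m^{[p]}$ over $J^{[p]}$) in place of $\m$, using that Cohen--Macaulayness gives $\ell(R/J^{[pq]})=p^dq^de(R)$ exactly, and compare with $p^d\ell(R/\m^{[q]})$ and $\ell(R/\m^{pq})$ to force $e_{HK}(\m)\ge 1+\tfrac1{p^dd!}$, contradicting the hypothesis; hence $R$ is regular. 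Here the robust conclusion is that \emph{some} explicit constant strictly greater than $1$ must appear; pinning it down to exactly $1/(p^dd!)$ rather than a weaker or more $p$-dependent estimate is the delicate point --- and, as the abstract signals, is precisely where the bound can later be sharpened.
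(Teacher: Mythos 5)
The paper does not prove Theorem~\ref{b-e}; it is quoted as background and cited to \cite{BE}, so there is no internal proof to compare against. On its own terms, your sketch assembles the right ingredients for Part~1 --- reduction to a complete local domain, a socle representative $u\in J^*\setminus J$ with $\m u\subseteq J$ for a minimal reduction $J$, the tight-closure Hilbert--Kunz Rees theorem giving $e_{HK}(J+uR)=e_{HK}(J)=e(J)=e$, and the crude bound $e_{HK}\ge e/d!$ --- but you stop exactly at the step that produces the constant. The route you propose for $3\le e\le d!$, comparing $\m^{[q]}$ with $\m^q$, cannot work: that comparison yields precisely $e_{HK}\ge e/d!$, which is $<1+\tfrac1{d!}$ throughout that range. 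Your handling of the non-Cohen--Macaulay case is likewise a placeholder: colon-capturing gives $u\in(x_1,\dots,x_i)^*\setminus(x_1,\dots,x_i)$, but that ideal is not $\m$-primary, and ``passing to an $\m$-primary situation'' is itself the nontrivial reduction. In Part~2 you explicitly concede that you have not produced the constant $1/(p^dd!)$.

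The missing idea is an elementary ``one-step'' inequality, iterated along composition series. If $I\subseteq I'$ are $\m$-primary with $\ell(I'/I)=1$, write $I'=I+xR$ with $x\m\subseteq I$; then $x^q\m^{[q]}\subseteq I^{[q]}$, so $(I')^{[q]}/I^{[q]}\cong R/(I^{[q]}:x^q)$ is a quotient of $R/\m^{[q]}$, giving $\ell(R/(I')^{[q]})\ge \ell(R/I^{[q]})-\ell(R/\m^{[q]})$ and hence $e_{HK}(I')\ge e_{HK}(I)-e_{HK}(\m)$. Iterating this along a composition series $J+uR=L_0\subsetneq\cdots\subsetneq L_{e-2}=\m$ gives $e_{HK}(\m)\ge e-(e-2)e_{HK}(\m)$, so $(e-1)e_{HK}(\m)\ge e$ and $e_{HK}(\m)\ge 1+\tfrac1{e-1}$, which exceeds $1+\tfrac1{d!}$ whenever $2\le e\le d!$; the case $e\ge d!+1$ is already covered by $e_{HK}\ge e/d!$. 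For Part~2, one runs the same chain argument between $J^{[p]}$ and $\m^{[p]}$: since $R$ is not regular, $\ell(R/\m^{[p]})\ge p^d+1$ (Kunz), so $\ell(\m^{[p]}/J^{[p]})\le p^de-p^d-1$, and iterating the one-step inequality and using $e_{HK}(L^{[p]})=p^de_{HK}(L)$ and $e_{HK}(J)=e$ (valid now because $R$ is F-rational, so $J^*=J$ and $R$ is Cohen--Macaulay) yields $(p^de-1)e_{HK}(\m)\ge p^de$, i.e.\ $e_{HK}(\m)\ge 1+\tfrac1{p^de-1}>1+\tfrac1{p^dd!}$ for $e\le d!$, with the large-$e$ case again handled by $e_{HK}\ge e/d!$. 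Your ``rerun with $\m^{[p]}$'' instinct is correct, but without this chain lemma and the Kunz input the constant never materializes.
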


A Noetherian local ring $R$ of prime characteristic $p$ is said to be \it F-rational \rm if ideals generated by
systems of parameters are tightly closed. For details concerning the theory of tight
closure and its relationship to the Hilbert-Kunz multiplicity, we refer to \cite{HH} and \cite{H}.
We will not be directly using tight closure in this paper, except at the very end of the paper.

The second result in Theorem \ref{b-e} is unsatisfactory due to its dependence on $p$.
Blickle and Enescu \cite{BE} raised the question of whether
or not the Hilbert-Kunz multiplicity is uniformly bounded away from $1$ for unmixed local rings
which are not regular. Specifically, the following was asked:

\smallskip

\begin{question}\label{question} Is there a constant $\epsilon(d)$, depending only on the dimension $d$ of $R$, such that if
$R$ is an unmixed local ring which is not regular, then $e_{HK}(R) -1 \geq \epsilon(d)$?
\end{question}
\smallskip

As a further question, Watanabe and Yoshida asked that if one can determine the infinmum of
Hilbert-Kunz multiplicities over all unmixed local Noetherian rings of characteristic $p$ and
dimension $d$ which are not regular, and if that infinmum is attained, to classify those rings which have the lowest value.

Aberbach and Enescu gave a positive answer to Question \ref{question} in \cite{AE}. Our main result, Theorem~\ref{mainthm},
improves upon their bound.

One could extend these questions to ask whether or not the Hilbert-Kunz multiplicities of 
rings of fixed characteristic and dimension are discrete within any bounded region, or are
there limit points?

Question \ref{question} was made even more explicit in \cite{WY3}, where the following conjecture is posed:

\begin{conj} Let $d \geq 1$ be an integer and $p > 2$ be a prime number. Set
$A_{p,d} :=K[[X_0,X_1,...,X_d]]/(X_0^2 +\ldots +X_d^2)$, where $K$ is the algebraic closure of the field with $p$
elements.  Let $(R, \m, K)$ be a $d$-dimensional
unmixed local ring. Then the
following two statements hold.
(1) If $R$ is not regular, then $e_{HK}(R) \geq  e_{HK}(A_{p,d}) \geq  1+ \frac{c_d}{d!}$. 
(2) If $e_{HK}(R) = e_{HK}(A_{p,d})$, then $\widehat{R}\cong A_{p,d}$.
\end{conj}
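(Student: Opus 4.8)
The plan is to squeeze $R$, step by step, down to a diagonal quadric and then run an explicit computation. First, since $e_{HK}(R)=e_{HK}(\widehat R)$ and $A_{p,d}$ is complete, we may assume $R$ is complete with algebraically closed residue field $K$. Note at the outset that because $R$ is unmixed of dimension $d$, the moment one knows the embedding dimension $\mu(\m)$ equals $d+1$ one gets that $R$ is a hypersurface: writing $\widehat R\cong S/I$ with $S$ regular of dimension $d+1$, the ideal $I$ has height $1$ and is unmixed, hence principal in the UFD $S$, so $R\cong S/(f)$ with $f\in\m_S^2$, and in particular $R$ is Cohen--Macaulay.

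The decisive step is to show that the hypothesis $e_{HK}(R)\le e_{HK}(A_{p,d})$ forces $\mu(\m)=d+1$ (the case $\mu(\m)=d$ being the excluded regular one) and, for the resulting equation $f$, forces $\ord(f)=2$. For the embedding-dimension bound I would try to prove the contrapositive --- that $\mu(\m)\ge d+2$ already makes $e_{HK}(R)>e_{HK}(A_{p,d})$ --- by comparing $e_{HK}(R)$ with the Hilbert--Kunz multiplicity of the tangent cone $\operatorname{gr}_{\m}(R)$, a $d$-dimensional standard graded $K$-algebra of embedding dimension $\mu(\m)\ge d+2$, and then supplying a lower bound, uniform in $p$, for $e_{HK}$ of such an algebra that exceeds the conjectured quadric value; the methods behind Theorem~\ref{b-e} and the Aberbach--Enescu bound are the natural starting point but must be sharpened considerably. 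Granting this, $R\cong S/(f)$; to see $\ord(f)=2$ one invokes the theory of Hilbert--Kunz multiplicities of hypersurfaces: a hypersurface singularity of multiplicity $e(R)=\ord(f)\ge 3$ has Hilbert--Kunz multiplicity at least that of a Fermat-type degeneration, which is an increasing function of the multiplicity, and this already overshoots $e_{HK}(A_{p,d})$.

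At this stage $f$ is a quadratic form, say of rank $r$, so after a linear change of coordinates $\widehat R\cong Q_r:=K[[X_0,\dots,X_d]]/(X_0^2+\dots+X_{r-1}^2)$; adjoining the remaining $d+1-r$ power-series variables does not change the Hilbert--Kunz multiplicity, so $e_{HK}(Q_r)=e_{HK}(A_{p,r-1})$. Part (1) therefore reduces to two facts about diagonal quadrics: (a) $e_{HK}(A_{p,s})$ is non-increasing in $s$, so that $\min_{0\le s\le d}e_{HK}(A_{p,s})=e_{HK}(A_{p,d})$; and (b) $e_{HK}(A_{p,d})\ge 1+\tfrac{c_d}{d!}$. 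Both are read off the Han--Monsky (and Gessel--Monsky) computations of the Hilbert--Kunz multiplicity of diagonal hypersurfaces, together with the monotonicity of $e_{HK}(A_{p,d})$ in $p$. For part (2), equality $e_{HK}(R)=e_{HK}(A_{p,d})$ forces equality in each of the steps above: $R$ is a hypersurface, $\ord(f)=2$, and by (a) the rank is $r=d+1$; hence $\widehat R\cong A_{p,d}$ --- modulo the genuinely exceptional small dimensions (for $d=1$ one has $e_{HK}(A_{p,0})=e_{HK}(A_{p,1})=2$, so the rigidity statement must be read with care there).

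The two real obstacles are the embedding-dimension reduction and the quadric estimates. The first needs a lower bound on $e_{HK}$ in terms of embedding dimension that is uniform in the characteristic and strong enough to beat $e_{HK}(A_{p,d})$ --- substantially better than what follows from Theorem~\ref{b-e} or from Aberbach--Enescu --- and one must also control the passage from $R$ to its tangent cone without assuming the tangent cone is reduced or Cohen--Macaulay, which is delicate because $e_{HK}$ is not in general monotone under this degeneration. The second is delicate because the Han--Monsky value of $e_{HK}$ of a diagonal quadric depends on $p$ in an arithmetically intricate way, and one has to understand that dependence well enough to establish both the monotonicity (in $s$ and in $p$) and the clean closed form $1+c_d/d!$. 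It is presumably the combination of these difficulties that keeps the statement at the level of a conjecture.
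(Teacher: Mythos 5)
This statement is labeled \verb|\begin{conj}| in the paper for a reason: it is the Watanabe--Yoshida conjecture from \cite{WY3}, and the paper under review does \emph{not} prove it. The paper merely records it as the motivating open problem, notes that Enescu and Shimomoto \cite{ES} have settled it for complete intersections, and that Watanabe and Yoshida themselves handled $d\le 4$ in \cite{WY3,WY4}. There is therefore no ``paper's own proof'' to compare your proposal against, and you in fact acknowledge this yourself in your final sentence. So the right way to assess your write-up is not as a proof but as a plan of attack.

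As a plan, the overall architecture is the standard one people have in mind for this conjecture, and several of your individual observations are sound: the reduction to the complete case, the observation that embedding dimension $d+1$ forces a hypersurface (height-one unmixed ideal in a regular UFD is principal), and the final reduction to diagonal quadrics via linear change of coordinates. But the two steps you flag as obstacles are not merely hard---in their current form they contain claims that are either unproven or false. First, $e_{HK}$ is not monotone under passage to the associated graded ring, and there is no known uniform-in-$p$ lower bound on $e_{HK}$ in terms of embedding dimension sharp enough to beat $1+c_d/d!$; Theorem~\ref{b-e} and the Aberbach--Enescu bounds (including the improved Theorem~\ref{mainthm}) fall far short of this, which is exactly why the conjecture remains open. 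Second, the assertion that a hypersurface of order $\ge 3$ has $e_{HK}$ at least that of a Fermat-type degeneration, increasing in multiplicity, is not a theorem one can ``invoke''; $e_{HK}$ is not upper semicontinuous under degeneration in the way that classical multiplicity is, and the Han--Monsky and Gessel--Monsky computations, while giving exact values for diagonal hypersurfaces, do not by themselves yield such a comparison. Likewise the claimed monotonicity of $e_{HK}(A_{p,s})$ in $s$ for fixed finite $p$ requires a real argument from the Han--Monsky formula; only the $p\to\infty$ limits $1+c_s/s!$ are manifestly decreasing. In short: your sketch correctly locates where the difficulty lies, but the load-bearing steps are precisely the unproven ones, so this does not constitute a proof of the conjecture---nor does the paper claim one.
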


In this conjecture, $c_d$ is defined by the equation, 
$$\mathrm{sec}(x) + \mathrm{tan}(x) = \sum_{d=0}^{\infty} \frac {c_d}{d!}x^d.$$
Monsky proved that $\mathrm{lim}_{p\rightarrow \infty}  e_{HK}(A_{p,d}) = 1 + \frac{c_d}{d!}$.
To get a feeling for what the conjecture says for small dimension, the coefficients $\frac{c_d}{d!}$
are as follows: $\frac{c_2}{2!} = \frac{1}{2}$, $\frac{c_3}{3!} = \frac{1}{3}$,
$\frac{c_4}{4!} = \frac{5}{24}$, and $\frac{c_5}{5!} = \frac{2}{15}$.
Enescu and Shimomoto prove the conjecture for complete intersections in \cite{ES}.

In \cite{WY3} and \cite{WY4} both the problem of lower bounds and classification are solved in dimensions at most four. Moreover
in low dimension they showed that there is a minimal value for the Hilbert-Kunz multiplicity for non-regular rings,
and rings which take this value are classified. 
For example, they proved
that if  $(R,\m,k)$ is a two-dimensional Cohen-Macaulay local ring of prime positive characteristic $p$, algebraically closed
residue field,  and multiplicity $e$
then $e_{HK}(R)\geq \frac {e+1}{2}$ with equality if and only if 
the associated graded ring of $R$ is isomorphic
to the $e$th Veronese subring of $k[x,y]$.
They also proved that 
if  $(R,\m,k)$ is a three-dimensional unmixed local ring of positive prime characteristic $p$ which is not regular,
then $e_{HK}(R) \geq \frac {4}{3}.$ Furthermore, 
if $k$ is algebraically closed of characteristic not $2$, then equality occurs
if and only if  $R$ is analytically isomorphic with $k[[x,y,z,w]]/(x^2+y^2+z^2+w^2)$.

Using an intricate and beautiful argument, Aberbach and Enescu solved the basic problem of bounds independent of the
characteristic in all dimensions:

\begin{theorem}[Aberbach-Enescu]\label{a-ebound} Let $(R,\m,k)$ be an unmixed ring of characteristic
$p$ and dimension $d\geq 2$.
If $R$ is not regular, then
$$e_{HK}(R)\geq 1 + \frac{1}{d(d!(d-1)+1)^d}.$$
\end{theorem}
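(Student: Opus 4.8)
The plan is to reduce to a tractable case and then read off the explicit constant from a comparison of the Hilbert--Kunz functions of $\m$ and of a minimal reduction of $\m$.

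\emph{Step 1 (reductions).} Completion and a suitable faithfully flat base change affect neither $e_{HK}$, nor regularity, nor unmixedness, so we may assume $R$ is complete with perfect (hence infinite) residue field. Since $\tfrac{1}{d(d!(d-1)+1)^{d}}<\tfrac1{d!}$ for $d\ge2$, if $e_{HK}(R)\ge 1+\tfrac1{d!}$ there is nothing to prove; so assume $e_{HK}(R)<1+\tfrac1{d!}$. Theorem~\ref{b-e}(1) then gives that $R$ is Cohen--Macaulay and F-rational, hence a normal local domain. From $e_{HK}(R)\ge e(R)/d!$ we get $e:=e(R)\le d!$, and since $R$ is unmixed and not regular the multiplicity--one criterion forces $e\ge 2$.

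\emph{Step 2 (reformulation).} Choose a minimal reduction $J=(x_1,\dots,x_d)$ of $\m$. As $R$ is Cohen--Macaulay, $x_1^{q},\dots,x_d^{q}$ is a regular sequence and $\ell(R/J^{[q]})=q^{d}e$ \emph{exactly}; hence
\[
e_{HK}(R)=e-\lim_{q\to\infty}\frac{\ell\big(\m^{[q]}/J^{[q]}\big)}{q^{d}},
\]
and the theorem is equivalent to $\displaystyle\lim_{q}\ell(\m^{[q]}/J^{[q]})/q^{d}\le e-1-\tfrac{1}{d(d!(d-1)+1)^{d}}$. Two structural facts feed in. First, $\ell(R/J)=e\le d!$ gives $\m^{e}\subseteq J$, whence (with $\dim R=d$ and $R$ Cohen--Macaulay) a bound on the reduction number $r$ of $\m$ with respect to $J$, namely $r\le d!(d-1)$; set $N:=r+1$. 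Second, non-regularity means $\m=J+(y_1,\dots,y_t)$ with $t=\mu(\m)-d\ge 1$ and $y_j\notin J+\m^{2}$, so that $\m^{[q]}/J^{[q]}$ is generated over $R/J^{[q]}$ by the classes $\overline{y_1^{\,q}},\dots,\overline{y_t^{\,q}}$, each annihilated by $(\m^{[q]})^{r}$; the payoff of the reformulation is that $\m^{[q]}/J^{[q]}$, unlike $R/\m^{[q]}$, is a module with few generators and a known annihilator.

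\emph{Step 3 (the core estimate).} It remains to prove the displayed inequality, which is equivalent in strength to the theorem; this is the crux. The naive estimate---bounding $\ell(\m^{[q]}/J^{[q]})$ by $t\cdot\ell\big(R/(\m^{[q]})^{r}\big)$ via the $t$ cyclic pieces $R\overline{y_j^{\,q}}$---is far too weak (it need not even respect $\ell(\m^{[q]}/J^{[q]})\le q^{d}e$). One must instead exploit that $R/J^{[q]}$ has total length only $q^{d}e$ and, modulo $J^{[q]}$, is built from at most $r$ layers of the $\m$-adic filtration, of which a ``bottom'' portion of length $q^{d}$ is forced (the inequality $e_{HK}(R)\ge1$ in disguise) and the remaining $(e-1)q^{d}$ cells are governed by the reduction number; the reduction structure then confines $\m^{[q]}/J^{[q]}$ to a submodule of length at most $\big(e-1-\tfrac1{dN^{d}}\big)q^{d}+O(q^{d-1})$. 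The exponent $d$ enters by iterating a one-parameter version of this bound over $x_1,\dots,x_d$---equivalently, via a secondary induction on $\dim R$ using $R/x_1R$---and the factor $\tfrac1d$ by apportioning the loss among those $d$ directions. Feeding this back through Step 2 gives $e_{HK}(R)\ge 1+\tfrac1{dN^{d}}\ge 1+\tfrac{1}{d(d!(d-1)+1)^{d}}$.

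\emph{The main obstacle} is exactly this core estimate: organizing the bookkeeping so that it is uniform in $q$ and delivers precisely the factor $\tfrac{1}{d(r+1)^{d}}$; the reduction-number bound $r\le d!(d-1)$ in Step 2 is the second place where the numerology must be pinned down. The same difficulty appears, homologically, if one works through Kunz's theorem that $R$ is regular iff $F^{e}_{*}R$ is free: writing $F^{e}_{*}R\cong R^{a_q}\oplus M_q$ with $M_q$ having no free direct summand, one has $\ell(R/\m^{[q]})=a_q+\mu(M_q)$, and the task becomes to show $\mu(M_q)-\rank(M_q)\ge \tfrac{q^{d}}{dN^{d}}+O(q^{d-1})$---that the non-free part of $R^{1/q}$ carries at least that many superfluous generators.
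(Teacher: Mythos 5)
This theorem is not proved in the paper at all; it is quoted as Theorem 1.3 and attributed to Aberbach--Enescu (reference [AE]), so there is no internal proof to compare against. Judged on its own terms, your Step 1 and Step 2 are fine reductions: passing to a complete ring with infinite perfect residue field, invoking Blickle--Enescu to get $R$ Cohen--Macaulay, F-rational, and a normal domain with $2 \le e \le d!$, and then rewriting the claim as an upper bound on $\lim_q \ell(\m^{[q]}/J^{[q]})/q^d$ with $J$ a minimal reduction --- all of this is correct, as is the observation that $\m^{[q]}/J^{[q]}$ is generated by the $\overline{y_j^q}$ and annihilated by $(\m^{[q]})^r$ (indeed $(\m^{[q]})^{r+1} = (\m^{r+1})^{[q]} \subseteq J^{[q]}$).

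However, Step 3 is not a proof but a wish list. The displayed inequality $\ell(\m^{[q]}/J^{[q]}) \le (e-1-\tfrac{1}{dN^d})q^d + O(q^{d-1})$ is precisely (a rescaling of) what the theorem asserts, and the passage ``the reduction structure then confines $\m^{[q]}/J^{[q]}$ to a submodule of length at most...'' does not explain how the reduction number actually forces such a bound; no mechanism is given that turns ``annihilated by $(\m^{[q]})^r$'' into a \emph{deficit} of order $q^d/(dN^d)$. Similarly, ``iterating a one-parameter version of this bound over $x_1, \dots, x_d$'' and ``the factor $\tfrac1d$ by apportioning the loss among those $d$ directions'' are not arguments --- neither the one-parameter statement nor the induction step is formulated. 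You candidly flag this as ``the main obstacle,'' which is exactly right: that obstacle is the entire content of the theorem, and it is left unaddressed. (Also, the claimed reduction number bound $r \le d!(d-1)$ is asserted from $\m^e \subseteq J$ without justification; a bound of this shape does hold, but it needs an argument, e.g. via superficial elements.) For the record, the Aberbach--Enescu proof proceeds quite differently from your sketch: after reducing to the Gorenstein F-regular case, it repeatedly adjoins $n$-th roots of elements of a minimal reduction (the $R_{i,n}$ extensions discussed in Section 3 of the present paper) until F-regularity fails --- detected via the tight-closure Brian\c con--Skoda theorem --- and then transports Hilbert--Kunz estimates back along these module-finite extensions using Theorem~\ref{finiteext}; your direct reduction-number count does not resemble that strategy and would have to be rebuilt essentially from scratch to close the gap.
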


Our main results in this paper build from the work of Aberbach and Enescu.\footnote{As this paper was being completed,
Aberbach and Enescu posted on the ArXiv a new paper with better bounds that their previous work. We will compare these bounds to ours in the last
section; our bounds given in Theorem \ref{mainthmintro} are better in general. See Remark \ref{remarka-e}.} 
In the next section  we give a bound in the Gorenstein case which
is often better than bounds in \cite{AE}. Our improvement comes  by introducing the
F-signature into the proofs, and consideration of a certain dual sequence.  However, our main theorem, in Section 3, 
is an improvement of Theorem~\ref{a-ebound}:

\begin{theorem}\label{mainthmintro} Let $(R,\m,k)$ be a Noetherian local unmixed ring of prime characteristic $p$ with infinite perfect residue field and dimension $d\geq 2$.
Let $\bs{x}$ be a minimal reduction of $\m$, and let $\mu$ be the maximal number of minimal generators of any ideal in
$R/(\bs{x})$. Let $t$ be the largest integer such that $\overline{\m^t}$ is not contained in $(\bs{x})$.
If $R$ is not regular, then
$$e_{HK}(R)\geq 1 +  (\text{min}\{\frac{1}{d!}, (\frac{\mu}{e-\mu}) \cdot \frac{1}{(\lceil\frac{d}{t}\rceil)^d}\}).$$
\end{theorem}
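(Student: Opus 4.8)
The plan is to pass to the completion, dispose of one case directly via Theorem~\ref{b-e}, and then in the remaining (Cohen--Macaulay, $F$-rational) case run a refined length computation that feeds in $t$ through the exponent $n=\lceil d/t\rceil$ and $\mu$ through a colength bound.

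\emph{Step 1 (Reductions).} Replacing $R$ by $\widehat R$ changes none of $e_{HK}(R)$, $d$, unmixedness, non-regularity, the residue field, or --- since $R/(\bs x)$ is Artinian and integral closure of $\m$-primary ideals commutes with completion over an analytically unramified ring --- the data $\mu$ and $t$. As $k$ is infinite, $J:=(\bs x)$ is a genuine minimal reduction of $\m$, hence a parameter ideal with $e(J)=e$; as $R$ is formally equidimensional, $e_{HK}(J)=e(J)=e$. From $\ell(R/\m\q)=\ell(R/J\q)-\ell(\m\q/J\q)$ one gets $e_{HK}(R)=e-\lim_q \ell(\m\q/J\q)/q^d$, and since $S:=R/J$ is Artinian (so $\m\q\subseteq J$ for $q\gg0$) this equals $\lim_q \ell(J/\m\q)/q^d$. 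So the task is a lower bound for $\ell(J/\m\q)$.

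\emph{Step 2 (Dichotomy).} If $R$ is not both Cohen--Macaulay and $F$-rational, the contrapositive of Theorem~\ref{b-e}(1) already gives $e_{HK}(R)\ge 1+\tfrac1{d!}\ge 1+\min\{\ldots\}$. So assume $R$ is Cohen--Macaulay and $F$-rational but not regular, and set $n:=\lceil d/t\rceil$ (note $1\le t\le d-1$, so $2\le n\le d$). Here $J$ is a regular sequence, $\ell(R/J\q)=eq^d$ exactly, and it suffices to prove the a priori stronger bound $e_{HK}(R)\ge 1+\tfrac{\mu}{e-\mu}\,n^{-d}$, which dominates $1+\min\{\tfrac1{d!},\tfrac{\mu}{e-\mu}n^{-d}\}$.

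\emph{Step 3 (Refined estimate).} The integral-closure input is: $\overline{\m^{\,k}}=\overline{J^{\,k}}$ for all $k$; the elementary Brian\c{c}on--Skoda inclusion $\overline{J^{\,k}}\subseteq J^{\,k-d+1}$ for $k\ge d$ (which also forces $t\le d-1$); $(\overline{I}\,)\q\subseteq\overline{I\q}$; and the valuative criterion (a nonzerodivisor $c$ with $cz^k\in I^k$ for all $k$ when $z\in\overline I$). Since $t$ is maximal with $\overline{\m^{\,t}}\not\subseteq J$, pick $z\in\overline{\m^{\,t}}\setminus J$; then $z^{\,n}\in(\overline{\m^{\,t}})^n\subseteq\overline{\m^{\,tn}}\subseteq J$ because $tn\ge d$, while $z\notin J$. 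Hence $z^{\,q}\in J^{[q']}$ for every $q'\le q/n$, and similarly each element of a lift $\mathfrak b$ of a maximally generated ideal of $S$ --- chosen with $J\subseteq\mathfrak b\subseteq\m$, with $\mu$ minimal generators modulo $J$, and with $\ell(R/\mathfrak b)=\ell\big(S/(\mathfrak b/J)\big)\le e-\mu$ --- carries a comparable Frobenius relation. Combining these one produces a nesting $\m\q\subseteq K_q$ with $J^{[q']}\subseteq K_q\subseteq J$ and $q'=\lfloor q/n\rfloor$, and then
$$\ell(J/\m\q)\ \ge\ \ell\big(J/J^{[q']}\big)\;+\;\ell\big(K_q/\m\q\big)\ \ge\ \big(e(q')^d-e\big)\;+\;\text{(contribution of }\mathfrak b\text{)}.$$
The first term is $\sim \tfrac{e}{n^d}q^d$; the second is bounded below using the $\mu$ independent directions in $K_q/\m\q$ supplied by the generators of $\mathfrak b$, and after dividing by $q^d$ the colength bound $\ell(R/\mathfrak b)\le e-\mu$ converts the count $\mu$ into $\tfrac{\mu}{e-\mu}$, scaled by the same Frobenius factor $n^{-d}$. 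Letting $q\to\infty$ gives $e_{HK}(R)\ge 1+\tfrac{\mu}{e-\mu}n^{-d}$.

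\emph{Main obstacle.} The crux is Step~3: choosing $K_q$ --- equivalently, an intermediate chain of Frobenius powers of $J$ and of $\mathfrak b$ and of the powers of $z$ --- so that \emph{simultaneously} the $n$-fold Frobenius contraction yields $q'=q/n+o(q)$ \emph{and} the quotient $K_q/\m\q$ still carries $\big(\tfrac{\mu}{e-\mu}+o(1)\big)q^d$ elements, i.e.\ making the $t$-contribution and the $\mu$-contribution cooperate rather than compete; and then verifying that all the error terms (from $\lfloor q/n\rfloor$ versus $q/n$, from the Brian\c{c}on--Skoda shift $k\mapsto k-d+1$, and from the nonzerodivisor $c$ in the valuative criterion) are $o(q^d)$ and hence harmless.
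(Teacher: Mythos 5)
Your proposal takes a genuinely different route from the paper's, but Step~3 is not actually carried out, and as written it contains an internal inconsistency, so there is a real gap.

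The paper does not work inside $R$ at all. After using Aberbach--Enescu (Theorem~\ref{a-e2}, not Blickle--Enescu) to reduce to the case $R$ Gorenstein and $F$-regular, it adjoins roots of the minimal reduction, $R_{i,n}=R[x_1^{1/n},\dots,x_i^{1/n}]$, picks the extension $S$ with $n^i$ minimal among those that fail to be $F$-regular, applies the $F$-signature bound of Theorem~\ref{T:ee} (with $s(S)=0$) to get $e_{HK}(S)\ge 1+\mu/(e-\mu)$, descends back to $R$ via Corollary~\ref{RS} losing a factor $n$ at each step, and finally bounds the minimal $n^i$ by $(\lceil d/t\rceil)^d$ using the Brian\c{c}on--Skoda theorem together with your observation that a socle element of the reduction lies in $\overline{\m^t}$. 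The Gorenstein hypothesis is essential there, because Theorem~\ref{T:ee} relies on dualizing $R^{1/q}$ over a Gorenstein ring; your reduction via Theorem~\ref{b-e}(1) only yields Cohen--Macaulay and $F$-rational, which would be insufficient even if you wanted to invoke the same machinery.

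Concretely, the gap in your Step~3: you posit $\m\q\subseteq K_q\subseteq J$ together with $J^{[q']}\subseteq K_q$, and then write
$\ell(J/\m\q)\ge \ell(J/J^{[q']})+\ell(K_q/\m\q)$. Since $\ell(J/\m\q)=\ell(J/K_q)+\ell(K_q/\m\q)$, this last inequality needs $K_q\subseteq J^{[q']}$, the \emph{reverse} of the containment you asserted; as stated the two terms on the right would double-count. Beyond that, the mechanism by which the factor $\mu/(e-\mu)$ is supposed to emerge from ``$\mu$ independent directions in $K_q/\m\q$ scaled by the colength of $\mathfrak b$'' is never made precise, and you acknowledge in your ``Main obstacle'' paragraph that you do not know how to make the $t$-contribution and $\mu$-contribution cooperate. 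In the paper that specific coefficient comes out of a nontrivial splitting-and-duality argument ($R^{1/q}\cong R^{a_q}\oplus M$ with $M$ having no free summand, dualized against $R$ Gorenstein), not from a direct colength count in $R$; I do not see how your length computation could reproduce it without some analogue of that input. So what you have is a plausible outline with the key step missing, not a proof.
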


Here and throughout the paper, by $\overline{J}$ we denote the integral closure of an ideal $J$.
Though the bound in Theorem \ref{mainthmintro}  is a substantial improvement on Theorem \ref{a-ebound}, it needs to be pointed out that our bound is still probably
far from best, as suggested by the conjecture of Watanabe and Yoshida above.
However, the method we use has some interest in its own right, and may be useful in other contexts.
For instance in Proposition \ref{Wat}, we use these ideas to give an affirmative answer to an old conjecture
of Watanabe in many cases.

\vskip.3truein

\section{New Estimates}

\bigskip

In this section we give new bounds  on the Hilbert-Kunz multiplicity which depend upon the F-signature.
We first recall that a Noetherian ring $R$ of prime characteristic $p$ is said to be \it F-finite \rm if
$R$ is a finitely generated $R$-module via the Frobenius.

\begin{definition} Let $(R,\m,k)$ be a Noetherian local ring of prime charactersitic $p$ and dimension $d$. Assume that
$R$ is F-finite, and reduced. For $q = p^e$, write $R^{1/q} \cong R^{a_q}\oplus M$, where $M$ has no free summands.
Set $\alpha(R) = \text{log}_p [k : k^p]$. Consider the sequence $\{\frac{a_q}{q^{d+\alpha(R)}}\}$. 
We denote by $s_{-}(R)$ and $s^+(R)$ the liminf and
limsup respectively of the above sequence as $q \rightarrow \infty$. If $s_{-}(R) = s^+(R)$, then
the limit, denoted by $s(R)$, is called the F-signature of R.
\end{definition}

Throughout the rest of the paper we will  write $s$ for $s(R)$ if there is no ambiguity about the ring $R$.

In very recent and striking work, Kevin Tucker \cite{T} has shown that the F-signature exists in general. In our main technical results the ring will be Gorenstein, in such case the existence of $F$-signature is known and easy to prove. 
For the statement of our first result, Theorem \ref{T:ee}, it is worth noting that the F-signature is always at
most $1$, and is equal to $1$ if and only if the ring is regular \cite[Corollary 16]{HL}.
Thus the fraction appearing in this theorem is always at least $1$, and is only exactly $1$
when the ring is regular.

\begin{theorem} \label{T:ee}
Suppose that  $(R,\m,k)$ is a reduced F-finite Gorenstein local ring of prime characteristic $p$ with
infinite perfect residue field.  Let
$e$ denote the multiplicity of $R$, and let $s$ denote the F-signature of $R$. We choose a minimal reduction $\bs{x}$ of the maximal ideal. 
For any ideal $I$ of $R$ such that $I\supseteq (\bs{x}),$ let
$\mu=\mu(I/(\bs{x})),$ then $$e_{HK}(R)\geq \frac{e-s\mu}{e-\mu} = 1 + \frac{\mu}{e-\mu}(1-s).$$
\end{theorem}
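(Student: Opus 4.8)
The plan is to compare $\ell(R/\m^{[q]})$ with $\ell(R/I^{[q]})$ for an ideal $I \supseteq (\boldsymbol{x})$ with $\mu = \mu(I/(\boldsymbol{x}))$, using the Gorenstein hypothesis to bring in the F-signature via a dual sequence. Since $\boldsymbol{x}$ is a minimal reduction of $\m$, we have $e = e(R) = \ell(R/(\boldsymbol{x}))$ (as $R$ is Cohen-Macaulay, being Gorenstein, so that $\boldsymbol{x}$ is a regular sequence and the reduction is "generic" in the sense that the multiplicity equals the colength). The first step is to establish the basic length estimate: because $\boldsymbol{x}$ is a regular sequence and $(\boldsymbol{x}^{[q]}) \subseteq \m^{[q]}$, one computes $\ell(R/(\boldsymbol{x}^{[q]})) = e\, q^d$, and thus
$$
\ell(R/\m^{[q]}) = e\,q^d - \ell(\m^{[q]}/(\boldsymbol{x}^{[q]})).
$$
So bounding $e_{HK}(R)$ below is the same as bounding $\ell(\m^{[q]}/(\boldsymbol{x}^{[q]}))$ above by $(e - e_{HK}(R))q^d + O(q^{d-1})$, and more generally we want to control $\ell(I^{[q]}/(\boldsymbol{x}^{[q]}))$.

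The key idea is to pass to Matlis duality inside $R/(\boldsymbol{x}^{[q]})$, which is Gorenstein Artinian (so self-injective). Under this duality, the submodule $I^{[q]}/(\boldsymbol{x}^{[q]}) \subseteq R/(\boldsymbol{x}^{[q]})$ corresponds to a quotient, and the number of generators $\mu(I/(\boldsymbol{x}))$ is linked, via the duality, to a colength/socle-type count in the dual. Concretely, since $R$ is Gorenstein, $R^{1/q}$ has a free summand of rank $a_q$ with $a_q = s\, q^{d+\alpha} + O(q^{d-1})$, $\alpha = \alpha(R)$ (and here $\alpha = 0$ by the perfect residue field hypothesis, which also lets us assume $k$ infinite so that minimal reductions exist). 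I would exploit that a free summand of $R^{1/q}$ of rank $a_q$ "sees" a submodule of $R/\m^{[q]}$ — the classical identification of the F-signature with the normalized length of the largest free part, $a_q = \ell\big((\text{largest submodule } N \text{ of } R/\m^{[q]} \text{ with } N \cong \text{free part contribution})\big)$. More precisely I would use that $a_q$ equals the length of the image of $\Hom_R(R^{1/q}, R) \to \Hom_R(R^{1/q}, R/\m)$ type construction; the cleanest route is: $a_q = \ell\big( (0 :_{R/\m^{[q]}} \text{ something})\big)$ via the Gorenstein socle. Then, because $I/(\boldsymbol{x})$ needs $\mu$ generators, its dual inside the Gorenstein Artinian ring $R/(\boldsymbol{x}^{[q]})$ contributes at least $\mu \cdot a_q + O(q^{d-1})$ to the relevant length count: roughly, each generator of $I/(\boldsymbol{x})$, pushed through Frobenius, forces an extra free-summand-worth of length, giving
$$
\ell\big(I^{[q]}/(\boldsymbol{x}^{[q]})\big) \le \ell\big(\m^{[q]}/(\boldsymbol{x}^{[q]})\big) \quad\text{with}\quad \ell\big((I/(\boldsymbol{x}))^{[q]}\big) \ge \mu\, a_q + O(q^{d-1}).
$$
Combining with $\ell(R/I^{[q]}) = \ell(R/(\boldsymbol{x}^{[q]})) - \ell\big((I/(\boldsymbol{x}))^{[q]}\big) + \ell\big(I^{[q]}\cap(\boldsymbol{x}^{[q]})/(\boldsymbol{x})^{[q]}\dots\big)$-type bookkeeping and dividing by $q^d$, the leading term yields $e_{HK}(I) \le e - s\mu$, and hence via $e_{HK}(R) \ge e_{HK}(R)/e_{HK}(I) \cdot e_{HK}(I) \ge \dots$ — actually the cleaner inequality is $e_{HK}(R) \ge e_{HK}(I) \cdot \frac{e}{?}$; I would instead argue directly that $e_{HK}(R) - e_{HK}(I) \ge$ (contribution), i.e. $\ell(I^{[q]}/\m^{[q]}) \le (\text{something})$, forcing $e_{HK}(R) \ge \frac{e - s\mu}{e-\mu}$ after normalizing by the fact that $I$ has colength $e - \mu$ in $R/(\boldsymbol{x}^{[q]})$ appropriately scaled.

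The main obstacle I anticipate is making precise the link between $\mu(I/(\boldsymbol{x}))$ and the asymptotic free rank $a_q$ after applying Frobenius — that is, showing $\ell\big((I/(\boldsymbol{x}))^{[q]} \cap R\big)$ or the appropriate socle-dual quantity grows like $\mu\, a_q$ and not merely like $a_q$. This requires the Gorenstein/self-duality of $R/(\boldsymbol{x}^{[q]})$ in an essential way: one dualizes the inclusion $(\boldsymbol{x}) \subseteq I$ to get a surjection, observes that a module needing $\mu$ generators dualizes to a module with $\mu$-dimensional socle, and then tracks how Frobenius multiplies socle dimensions by $a_q$ up to lower order terms. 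A secondary technical point is handling the $O(q^{d-1})$ error terms uniformly so that they genuinely vanish in the limit defining $e_{HK}$ and don't interfere with the (possibly only liminf) definition of $s$ — though since $R$ is Gorenstein here, $s(R)$ genuinely exists, which simplifies this. Once the bound $e_{HK}(R) \ge \frac{e - s\mu}{e - \mu}$ is in hand, rewriting it as $1 + \frac{\mu}{e-\mu}(1-s)$ is just algebra, and the remark that this exceeds $1$ unless $R$ is regular follows from $s \le 1$ with equality iff regular, together with $\mu < e$ (which holds since $I/(\boldsymbol{x})$ is a proper submodule of $R/(\boldsymbol{x})$ of length $e$, so $\mu \le e - 1 < e$, the case $\mu = 0$ being trivial).
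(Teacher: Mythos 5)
Your overall instinct is right: the theorem is proved by exploiting Gorenstein duality to feed the free-rank $a_q$ of $R^{1/q}$ into a length estimate modulo a minimal reduction. But the proposal leaves the load-bearing step as an acknowledged ``obstacle,'' and the sketch you give for closing it does not work as stated. You want to claim that each generator of $I/(\bs{x})$ ``forces an extra free-summand-worth of length,'' i.e.\ something like $\ell\bigl((I/(\bs{x}))^{[q]}\bigr)\ge \mu\, a_q + O(q^{d-1})$, but that quantity is not even well-defined as written, and the intended inequality points the wrong direction relative to what the argument needs: what one must control is an \emph{upper} bound for $\ell(I^{[q]}/\bs{x}^{[q]})$ (equivalently a lower bound for $\ell(R/I^{[q]})$ that can be compared to $eq^d$). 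Also, the normalization ``$I$ has colength $e-\mu$'' is false in general: $\ell(R/I)=e-\ell(I/(\bs{x}))$, and $\ell(I/(\bs{x}))=\mu$ only if $\m I\subseteq(\bs{x})$. The correct bookkeeping that makes $e-\mu$ appear is the identity $\ell(R/I)+\ell\bigl((\m I+(\bs{x}))/(\bs{x})\bigr)=e-\mu$, and you never isolate the term $\ell\bigl((\m I+(\bs{x}))/(\bs{x})\bigr)$ that it requires.

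Concretely, here is what is missing. The paper takes a free cover $F=R^{\mu(R^{1/q})}\twoheadrightarrow (R^{1/q})^*$ with kernel $N$, dualizes (exact because $(R^{1/q})^*$ is MCM and $R$ Gorenstein) to obtain $0\to R^{1/q}\to F^*\to N^*\to 0$, splits off the $a_q$ free summands of $R^{1/q}\cong R^{a_q}\oplus M$ to get $0\to M\to G^*\to N^*\to 0$ with $G^*\cong R^{e_{HK}q^d-a_q+O(q^{d-1})}$ and with $\mathrm{Image}(M)\subseteq\m G^*$ (since $M$ has no free summand), and then uses that $N^*$ is MCM, hence $\Tor_1^R(N^*,R/(\bs{x}))=0$, to land the injection $M/\bs{x}M\hookrightarrow G^*/\bs{x}G^*$. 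This is the precise mechanism that turns ``$M$ has no free summand'' into a quantitative length bound, and it is what replaces your heuristic about socle dimensions being ``multiplied by $a_q$.'' The resulting containment $\frac{IR^{1/q}}{\bs{x}R^{1/q}}\hookrightarrow \frac{IR^{a_q}}{\bs{x}R^{a_q}}\oplus\frac{(\m I+\bs{x})G}{\bs{x}G}$ is exactly where both $\mu$ and $a_q$ enter. You also omit the second essential ingredient: the elementary but indispensable bound $\ell(R/I^{[q]})\le \ell(R/\m^{[q]})\cdot\ell(R/I)$ coming from a cyclic filtration of $R/I$, which is needed to compare $\ell(R/I^{[q]})$ to $e_{HK}q^d$ without knowing $e_{HK}(I)$ itself. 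Without both of these, the inequality $e\le e_{HK}(e-\mu)+s\mu$ does not follow from what you have written.
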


\begin{proof}
We write $e_{HK}$ for $e_{HK}(R)$. Let $F=R^{\mu(R^{1/q})}$. In the proof of this theorem
we use the notation $M^* = \text{Hom}_R(M,R)$. Note that $(R^{1/q})^*\cong R^{1/q}$ since $R$ is Gorenstein: the dual
of $R^{1/q}$ is isomorphic to the canonical module of $R^{1/q}$, and since $R^{1/q}$ is also
Gorenstein, this canonical module is isomorphic to the ring. Hence there is a short exact sequence
$0\to N\to F\to (R^{1/q})^*\to0.$
Its dual is exact since $(R^{1/q})^*$ is maximal Cohen-Macaulay and $R$ is Gorenstein:
$$0\to R^{1/q}\to F^*\to N^*\to 0$$
(see  \cite[Theorem 3.3.10(c)]{BH}).
We now write $R^{1/q}\cong R^{a_q}\oplus M$, where $M$ has no free summand. Note that 
this means $a_q$ copies of $R$ split from the above short exact sequence, giving rise to
an exact sequence,
$$0\to M\to G^*\to N^*\to 0$$
where $G = R^{\mu(R^{1/q})-a_q}$. 
Observe that the image of $M$ is contained in $\m G^*$ since $M$ has no free summand
(if not, then dualizing would give a free summand of $M^*$, and hence of $M^{**}\cong M$).

Since $\mu(F)=\ell(R/\m^{[q]}) = e_{HK}q^d+O(q^{d-1}),$ 
we can write $G^* = R^{e_{HK}q^d-a_q+d_q}$, where $d_q = O(q^{d-1})$.  Since $N$ is a
maximal Cohen-Macaulay module (MCM), so is $N^*$ by \cite[Theorem 3.3.10(d)]{BH}. 
Then $\text{Tor}_1^R(N^*,R/(\bs{x}))=0.$ Therefore, we have an injection 
$$\frac{M}{\bs{x}M} \hookrightarrow \frac{G^*}{\bs{x}G^*},$$
Then $$\frac{IR^{1/q}}{\bs{x}R^{1/q}} = \frac{IR^{a_q}}{\bs{x}R^{a_q}}\oplus \frac{IM}{\bs{x}M}
\hookrightarrow \frac{IR^{a_q}}{\bs{x}R^{a_q}} \oplus \frac{(\m I+\bs{x})G}{\bs{x}G}.$$ 
Computing the length, we see that
\begin{align*}
&\ell(IR^{1/q}/\bs{x}R^{1/q})=\ell(I^{[q]}R/\bs{x}^{[q]}R)\\ =&\ell(R/\bs{x}^{[q]}R)- \ell(R/I^{[q]})\\
\leqslant& a_q\ell(I/(\bs{x})) +\ell((\m I+(\bs{x}))/(\bs{x}))(e_{HK}q^d-a_q+d_q).
\end{align*}
Since $R$ is Cohen-Macaulay, we obtain that
$$\ell(R/\bs{x}^{[q]}R) = eq^d\leq \ell(R/I^{[q]})+ a_q\ell(I/(\bs{x})) +\ell((\m I+(\bs{x}))/(\bs{x}))(e_{HK}q^d-a_q+d_q).$$

Let $I\subset J_1\subset J_2\subset \cdots \subset R$ be a cyclic filtration of ideals of $R$ such that $J_{i+1}=J_i+(s_i)$ and 
such that $J_{i+1}/J_i\cong k$.
Since there are surjections $R/\m^{[q]}\twoheadrightarrow J^{[q]}_{i+1}/J^{[q]}_i,$ we see that
$\ell(R/I^{[q]})\leqslant \ell(R/\m^{[q]})\cdot \ell(R/I).$ 
Therefore, taking the $q$th Frobenius of the filtration of $I\subset R,$ we get
\begin{align*}
eq^d&\leqslant \ell(R/I^{[q]})+a_q\ell(I/(\bs{x}))+\ell(\frac{\m I+(\bs{x})}{(\bs{x})})(e_{HK}q^d-a_q+d_q)\\
&\leqslant \ell(R/\m^{[q]})\ell(R/I)+a_q\mu+e_{HK}q^d\ell(\frac{mI+(\bs{x})}{(\bs{x})})+d_q\ell(\frac{mI+(\bs{x})}{(\bs{x})}).\\
\end{align*}
(Note that $a_q\mu = a_q\ell(I/(\m I + (\bs{x}))$.)
Dividing by $q^d$ and taking limits as $q\rightarrow \infty$, and observing that $\text{lim}\frac{a_q}{q^d} = s$ and
$\ell(R/(\bs{x})) = e$,  we see that
$e\leqslant e_{HK}(e-\mu)+s\mu,$ hence \[\frac{e-s\mu}{e-\mu}\leqslant e_{HK}.\qedhere\]
\end{proof}

This result can be used to recover and sometimes improve estimates in \cite{AE}. For example,
compare the following to \cite[Cor. 3.7]{AE}.

\begin{corollary}\label{bound} Let  $(R,\m,k)$ be a reduced F-finite Gorenstein local ring of prime characteristic $p$ with
infinite perfect residue field.
Then $$e_{HK}(R)\geq \frac{e-s(v-d)}{e-v+d},$$ where $v$ is the embedding dimension of $R$, i.e., the number of
minimal generators of $\m$.  If $R$ is not F-regular, then $$e_{HK}(R)\geq \frac{e}{e-v+d},$$
\end{corollary}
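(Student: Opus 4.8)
The plan is to apply Theorem~\ref{T:ee} with the cleverest available choice of the ideal $I$. The obvious candidate is $I = \m$ itself: since $\bs{x}$ is a minimal reduction of $\m$ and the residue field is infinite, $\bs{x}$ is a system of parameters with $\bs{x}\subseteq\m$, so $I = \m \supseteq (\bs{x})$ is legal in the theorem. The first step is to identify the invariant $\mu = \mu(\m/(\bs{x}))$ in ring-theoretic terms: because $(\bs{x})$ is generated by $d$ elements that are part of a minimal generating set of $\m$ (one can choose the minimal reduction inside $\m$ so that its generators extend to a minimal generating set, or argue via $\bs{x}\subseteq\m$ and $\bs{x}\not\subseteq\m^2$ after a general choice), we get $\mu(\m/(\bs{x})) = v - d$ where $v = \mu(\m)$ is the embedding dimension. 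Plugging $\mu = v-d$ into the bound $e_{HK}(R)\geq 1 + \frac{\mu}{e-\mu}(1-s) = \frac{e - s\mu}{e-\mu}$ of Theorem~\ref{T:ee} yields exactly $e_{HK}(R)\geq \frac{e - s(v-d)}{e-v+d}$, which is the first inequality.

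For the second inequality, I would invoke the fact recalled just before Theorem~\ref{T:ee}: the F-signature satisfies $0\le s \le 1$, and more to the point, $s(R) = 0$ when $R$ is not strongly F-regular (this is \cite[Theorem 0.2]{AL} / the Aberbach--Leuschke result that $s(R)>0$ iff $R$ is strongly F-regular). Under the hypothesis that $R$ is not F-regular we therefore have $s = 0$, and the bound $\frac{e - s(v-d)}{e - v + d}$ collapses to $\frac{e}{e - v + d}$, giving the claimed $e_{HK}(R)\geq \frac{e}{e-v+d}$.

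The one genuine point requiring care — the main obstacle — is justifying $\mu(\m/(\bs{x})) = v - d$. This is not automatic for an arbitrary minimal reduction: one needs that the $d$ generators of $\bs{x}$ can be taken to be linearly independent modulo $\m^2$, i.e. to form part of a minimal generating set of $\m$. Since $k$ is infinite, a generic minimal reduction of $\m$ is generated by $d$ generic $k$-linear combinations of a minimal generating set $x_1,\dots,x_v$ of $\m$, and generic such combinations are linearly independent mod $\m^2$; hence $(\bs{x}) + \m^2$ has colength $v - d$ in $\m/\m^2$, from which $\mu(\m/(\bs{x})) = \dim_k \m/((\bs{x})+\m^2) = v - d$. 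One should remark that the statement of Theorem~\ref{T:ee} allows any minimal reduction, so replacing $\bs{x}$ by a generic one only improves (or preserves) the estimate. Everything else is a direct substitution, so the corollary follows at once.
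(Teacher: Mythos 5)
Your proof is correct and matches the paper's own argument: the paper likewise obtains the first inequality by applying Theorem~\ref{T:ee} with $I = \m$ (so $\mu(\m/(\bs{x})) = v - d$), and the second by invoking that $s(R) = 0$ when $R$ is not F-regular, citing \cite{HL} and \cite{AL}. The only addition you make is to flag and justify the identity $\mu(\m/(\bs{x})) = v - d$; this is in fact automatic for any minimal reduction $\bs{x}$ of $\m$ (if some unit combination of the generators lay in $\m^2$, Nakayama would show that $d-1$ of them already form a reduction, contradicting $\operatorname{ht}\m = d$), so the appeal to a generic choice, while harmless, is not needed.
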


\begin{proof}  The proof of the first statement follows immediately from Theorem~\ref{T:ee} by applying this theorem to
the ideal $I = \m$. The second statement holds since if $R$ is not F-regular, then $s= 0$ \cite{HL}
(the converse is also true, see \cite{AL}).
\end{proof}

\begin{remark}\label{F-sing}{\rm If $R$ is not F-rational  (or even not strongly F-regular), then $s(R) = 0$, and Corollary \ref{bound}, already in \cite{AE}
in this case, gives a very good uniform bound away from $1$ for the Hilbert-Kunz multiplicity. In fact, when $R$ is Gorenstein, $s(R)>0$ if and only if $R$ is $F$-rational (equivalently in this case, strongly $F$-regular, weakly $F$-regular or $F$-regular).

In general, even in the F-rational case,
we would have a very good estimate bounding the Hilbert-Kunz multiplicity away from $1$, if we could bound $s$ below $1$
uniformly. In fact the problem of bounding the F-signature below $1$ and that of bounding the Hilbert-Kunz multiplicity above $1$ are
in some sense the same problem. In \cite[Proposition 14]{HL}, the following upper bound is given:
$$(e-1)(1-s) \geq e_{HK}(R) -1.$$ Combining this with Theorem \ref{T:ee} and rewriting gives:
$$\frac {\mu}{e-\mu}\leq \frac {e_{HK}(R)-1}{1-s}\leq e-1.$$
This means that the ratio in the middle of the two terms we are interested in are trapped between numbers depending on the usual
multiplicity of the ring and the number $\mu$. Note that equality holds between the top and bottom terms in the inequality if
and only if $\mu = e-1$ if and only if $R$ has minimal multiplicity.}
\end{remark}

\vskip.3truein

\section{Chains of Integrally Closed Ideals}
\bigskip
We begin this section with a seemingly unrelated result. The idea behind this is found in the
thesis of Choi \cite{Ch}. Note that we do not assume $R$ to be of prime characteristic.

\begin{proposition} \label{ranklemma} Let $(R,\m)$ be a local Noetherian domain, and
let $I=(J,u)$ where $J$  is an integrally closed $\mathfrak{m}$-primary ideal of $R$ and $u\in J:\m$.
If $M$ is a finitely generated torsion-free $R$-module, then $$\ell(IM/JM)\geq \rank M. $$
\end{proposition}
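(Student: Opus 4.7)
My plan is to reinterpret $IM/JM$ as a quotient of $M/\m M$ via multiplication by $u$, and then to bound the kernel of this surjection by a determinantal (Cayley--Hamilton style) trick that leverages the integral closure of $J$. We may assume $u\notin J$ (otherwise $I=J$ and the statement is trivial). Because $u\m\subseteq J$, multiplication by $u$ descends to a surjection $\varphi\colon M/\m M\twoheadrightarrow IM/JM$ with kernel $\bar K=(JM:_M u)/\m M$. Writing $s=\dim_k \bar K$ and $\mu=\mu(M)$, we get $\ell(IM/JM)=\mu-s$, so the inequality reduces to showing $s\le \mu-r$, where $r=\rank M$.

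To control $\bar K$, I would lift a $k$-basis $\bar m_1,\dots,\bar m_s$ of $\bar K$ to elements $m_1,\dots,m_s\in M$ and extend by Nakayama to a minimal generating set $m_1,\dots,m_\mu$ of $M$. For each $i\le s$ the condition $um_i\in JM$ yields $j_{ik}\in J$ with $um_i=\sum_{k=1}^\mu j_{ik}m_k$. Setting $N=Rm_{s+1}+\cdots+Rm_\mu$ and $A=(u\delta_{ik}-j_{ik})_{1\le i,k\le s}$, these relations are equivalent to $A\cdot(m_1,\dots,m_s)^T\in N^s$.

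The main step is the determinantal trick: multiplying by $\mathrm{adj}(A)$ gives $DM\subseteq N$, where $D=\det A=u^s+c_1u^{s-1}+\cdots+c_s$ with $c_j\in J^j$ (this is the characteristic polynomial of the submatrix $(j_{ik})_{i,k\le s}$ evaluated at $u$). If $D=0$, then this is an equation of integral dependence for $u$ over $J$, contradicting $u\notin \overline J=J$. Hence $D\neq 0$, and since $M$ is torsion-free over the domain $R$, multiplication by $D$ is injective, so $\rank(DM)=r$. But $N\supseteq DM$ is generated by $\mu-s$ elements, hence has rank at most $\mu-s$; combining gives $r\le \mu-s$, as desired.

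The subtle point I expect to wrestle with is choosing the right $s\times s$ submatrix---indexed precisely by the ``kernel'' generators $m_1,\dots,m_s$---so that the same determinantal computation simultaneously produces the submodule inclusion $DM\subseteq N$ controlling rank and the integral dependence equation activating the hypothesis $\overline J = J$. Engineering these two features out of a single matrix, rather than trying to work with a full $\mu\times\mu$ presentation, is the heart of the argument.
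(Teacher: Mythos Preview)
Your argument is correct and is essentially the paper's own proof, just written out more explicitly. The paper sets $N=(JM:_M u)$, writes $M=N+N'$ with $\mu(N')=\ell(IM/JM)$, observes that $u(M/N')\subseteq J(M/N')$, and applies the determinantal trick to get a nonzero $r=u^n+j_1u^{n-1}+\cdots+j_n$ (with $j_i\in J^i$) satisfying $rM\subseteq N'$, whence $\mu(N')\geq\rank N'=\rank M$. Your $N=Rm_{s+1}+\cdots+Rm_\mu$ is exactly the paper's $N'$, and your explicit $s\times s$ adjugate computation is the content of the cited ``determinantal trick.'' One minor quibble: when $u\in J$ the statement is not trivially true but rather vacuous (indeed false for $M\neq 0$); the paper, like you, tacitly assumes $u\notin J$.
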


\begin{proof}
Set $N=(JM:_Mu)$. Since $\displaystyle{\frac{M}{N}\cong \frac{(J,u)M}{JM}}$ and $\mathfrak{m}M\subseteq N$, we can write $M=N+N'$ with $\displaystyle{\mu(N')=\ell\left(\frac{IM}{JM}\right)}$.
Thus it suffices to prove $\mu(N') \geq \rank(M)$. Since $u(M/N')\subseteq J(M/N')$, it follows from the
determinantal trick \cite[2.1.8]{SH} that there is an element $ r = u^n+j_{1} \cdot u^{n-1}+\cdots + j_{n}$
with $j_{i}\in J^{i}$ for all $i$ such that 
$rM \subseteq N'$.
Observe that $r \neq 0$ since $J$ is integrally closed and $u\notin J$. Since
$M_r = N'_r$, this implies that   $\mu(N')\geq \rank(N')=\rank(M)$.
\end{proof}

Given two ideals $I$ and $J$ with $J \subseteq I$, $\overline{\ell}(I/J)$ will denote the length of the longest chain of integrally closed ideals between $J$ and $I$.

\begin{corollary} \label{rankcor} Let $(R,\m)$ be a Noetherian local domain.  Let $J$ be an integrally closed $\mathfrak{m}$-primary ideal of $R$ and let $I$ be an ideal containing $J$.
If $M$ is a finitely generated torsion-free $R$-module, then $$\ell(IM/JM)\geq \overline{\ell}(I/J) \cdot \rank(M).$$
\end{corollary}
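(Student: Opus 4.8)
The plan is to reduce Corollary \ref{rankcor} to repeated application of Proposition \ref{ranklemma} along a maximal chain of integrally closed ideals between $J$ and $I$. Write $n = \overline{\ell}(I/J)$ and pick a chain
$$J = I_0 \subsetneq I_1 \subsetneq \cdots \subsetneq I_n = I$$
in which every $I_j$ is integrally closed and no integrally closed ideal can be inserted strictly between consecutive terms. The length of $IM/JM$ is additive along the filtration $JM = I_0M \subseteq I_1M \subseteq \cdots \subseteq I_nM = IM$, so it suffices to show $\ell(I_{j+1}M/I_jM) \geq \rank M$ for each $j$. Since each $I_j$ is integrally closed, $\ell(I_{j+1}M/I_jM)$ follows from Proposition \ref{ranklemma} as soon as we can write $I_{j+1} = (I_j, u)$ with $u \in I_j :_R \m$; so the crux is to produce such an element.

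The key step, then, is the following claim: if $J'$ is integrally closed, $I'$ is an ideal properly containing $J'$, and there is no integrally closed ideal strictly between them, then $I' = (J', u)$ for some $u \in J' :_R \m$. To see this, pick any $u \in I' \setminus J'$ and consider $J'' = \overline{(J', u)}$, the integral closure of $(J', u)$. This is an integrally closed ideal with $J' \subsetneq J'' \subseteq I'$, so by maximality of the chain $J'' = I'$; thus $\overline{(J', u)} = I'$, and in particular $(J', u) \subseteq I' = \overline{(J', u)}$ forces us to be careful — we actually want $I' = (J', u)$ on the nose, not just up to integral closure. To get this, note $I'/J'$ is a nonzero module over the Artinian local ring $R/J'$ — wait, $R/J'$ need not be Artinian unless $J'$ is $\m$-primary; but since $J$ is $\m$-primary and $J \subseteq J'$, indeed $R/J'$ is Artinian local. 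Among all $u \in I' \setminus J'$ one can choose $u$ so that its image generates the socle-adjacent step: more directly, since $\overline{\ell}$ measures a chain, it is harmless to refine the chain so that each quotient $I_{j+1}/I_j$ has length one as an $R$-module; then automatically $I_{j+1} = (I_j, u)$ with $\m u \subseteq I_j$, i.e. $u \in I_j :_R \m$, and $u \notin I_j$ because the step is proper. One must check this refinement does not decrease $\overline{\ell}(I/J)$; in fact a maximal chain of integrally closed ideals need not have length-one steps, so the honest argument is: given a maximal chain of integrally closed ideals, each step $I_j \subsetneq I_{j+1}$ with no integrally closed ideal between them, pick $u \in I_{j+1}\setminus I_j$ with $\m u \subseteq I_j$ (possible since $R/I_j$ is Artinian local, so its socle is nonzero and meets the image of $I_{j+1}$ — because $I_{j+1}/I_j$ is a nonzero submodule of the Artinian local module $R/I_j$ and hence contains a socle element), and then $\overline{(I_j, u)}$ is integrally closed, contains $I_j$ properly, and sits inside $I_{j+1}$, so equals $I_{j+1}$ by maximality; now apply Proposition \ref{ranklemma} with this $u$, $J = I_j$ (integrally closed), $I = (I_j, u)$ to get $\ell((I_j,u)M/I_jM) \geq \rank M$, and since $(I_j, u)M \subseteq I_{j+1}M$ this gives $\ell(I_{j+1}M/I_jM) \geq \rank M$.

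Summing over $j = 0, \dots, n-1$ yields
$$\ell(IM/JM) = \sum_{j=0}^{n-1} \ell(I_{j+1}M/I_jM) \geq n \cdot \rank M = \overline{\ell}(I/J)\cdot \rank M,$$
which is the desired inequality.

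I expect the main obstacle to be the bookkeeping around the element $u$: one needs $u \in I_{j+1}\setminus I_j$ with $\m u \subseteq I_j$ so that Proposition \ref{ranklemma} applies with $I_j$ in the role of its "$J$", and simultaneously one needs $(I_j, u)$ to fill the whole step $I_{j+1}$, which is where the maximality of the integrally-closed chain together with the fact that $\overline{(I_j,u)}$ is integrally closed gets used. The Artinian-local hypothesis on $R/I_j$ — valid because $J$ is $\m$-primary — is what guarantees such a socle element $u$ exists. Everything else is the additivity of length along a filtration and assembling the telescoping sum.
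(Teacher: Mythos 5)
Your proof is correct and follows essentially the same route as the paper: take a maximal chain of integrally closed ideals, extract a socle element at each step, apply Proposition~\ref{ranklemma}, and sum. The paper's proof is exactly this, stated tersely with $u_j \in K_{j+1}\cap\Soc(K_j)$. Two small remarks. First, the lengthy middle worry about whether $(I_j,u)=I_{j+1}$ ``on the nose'' (and the attempted refinement to length-one steps) is a red herring that you yourself correctly abandon: Proposition~\ref{ranklemma} only needs $u\in I_j:\m$ with $u\in I_{j+1}\setminus I_j$, and then $(I_j,u)M\subseteq I_{j+1}M$ gives $\ell(I_{j+1}M/I_jM)\geq\ell((I_j,u)M/I_jM)\geq\rank M$ regardless of whether $(I_j,u)$ fills the step. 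Second, you write the chain as ending at $I_n=I$, which implicitly assumes $I$ is integrally closed, but the corollary does not assume this; the fix is immediate (take the maximal chain $J=I_0\subsetneq\cdots\subsetneq I_n\subseteq I$ of integrally closed ideals and use monotonicity of length, $\ell(IM/JM)\geq\ell(I_nM/JM)$), so this does not affect the validity of the argument.
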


\begin{proof} Set $n = \overline{\ell}(I/J)$. Then  there is a chain of ideals $$J=K_0\subset K_1\subset \ldots \subset K_{n-1}\subset K_{n}\subset I$$ with $\overline{K_{i}}=K_{i}$ for all $i$. Then
$$\ell\left(IM/JM\right) \geq  \sum_{j=0}^{n} \ell(K_{j+1}M/K_{j}M) \geq \sum_{j=0}^{n} \ell((K_{j},u_{j})M/K_{j}M)$$
for some $u_{j}\in K_{j+1} \cap \Soc(K_{j})$. Thus the result follows from Proposition \ref{ranklemma}.
\end{proof}

In \cite[2.17]{WY1}, the following conjectures were raised:

\smallskip

{\it  Let $A$ be a Cohen-Macaulay local ring of characteristic
$p > 0$. Then for any $\m$-primary ideal $I$, we have
   (1) $e_{HK}(I) \geq \ell(A/I).$
   (2) If pd$_A(A/I) < \infty$, then $e_{HK}(I) = \ell(A/I)$.}

\smallskip

Although this conjecture has turned out to not be true (for example, see the paper of Kurano \cite{Ku}), our next result shows that
(1) is true for many $\m$-primary ideals:

\begin{proposition}\label{Wat} Assume $(R,\m)$ is an excellent normal ring of prime characteristic $p$ with an algebraically closed residue field. If $I$ is an integrally closed $\mathfrak{m}$-primary ideal of $R$, then
$$e_{HK}(I) \geq \ell(R/I)+e_{HK}(R)-1.$$
If $I$ is an $\m$-primary ideal such that there is an integrally closed ideal $K\subset I$ with
$\ell(I/K) = 1$, then 
$$e_{HK}(I) \geq \ell(R/I).$$
\end{proposition}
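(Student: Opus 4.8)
\textbf{Proof proposal for Proposition \ref{Wat}.}

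The plan is to compare the Hilbert–Kunz function of $I$ with that of $\m$, using the filtration argument and the rank estimate of Corollary \ref{rankcor} applied to the module $M = R^{1/q}$. Since $R$ is excellent and normal, $R^{1/q}$ is a finitely generated torsion-free (indeed reflexive) $R$-module of rank $q^{\alpha}$ where $q^{\alpha} = [R^{1/q}:R]$ is the generic rank; for the Hilbert–Kunz count what matters is that $\ell(IR^{1/q}/\m^{[q]}R^{1/q})$, read back down in $R$ via Frobenius, equals $\ell(R/\m^{[q]}) - \ell(R/I^{[q]})$ up to the usual bookkeeping, and that $\ell(R/I^{[q]}) = \ell(\m^{[q]}/I^{[q]}) \cdot (\text{something}) $... more precisely I would write $\ell(R/I^{[q]}) = e_{HK}(I)q^d + O(q^{d-1})$ and $\ell(R/\m^{[q]}) = e_{HK}(R)q^d + O(q^{d-1})$ and aim to show
\[
e_{HK}(R)q^d - e_{HK}(I)q^d + O(q^{d-1}) \;=\; \ell\!\left(I^{[q]}/\m^{[q]}\right) \;\geq\; \overline{\ell}(I/\m^{[q]})\ ?
\]
That is the wrong direction, so instead the key is to estimate $\ell(R/I^{[q]})$ from \emph{below}: I want $\ell(R/I^{[q]}) \geq (\ell(R/I)) \cdot$ (a quantity growing like $q^d$ with leading coefficient $e_{HK}(R)$ minus a correction), which is exactly the content of Corollary \ref{rankcor}. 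Concretely, because $I$ is integrally closed and $\m$-primary, one can build a chain of integrally closed ideals from $\m^{[q]}$... no: $\m^{[q]}$ need not be integrally closed. The right setup is to start from $\overline{\m^{[q]}}$, or better, to produce inside $R/I^{[q]}$ a filtration whose successive quotients are detected by Corollary \ref{rankcor} with $M$ a suitable torsion-free module.

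Here is the cleaner route I would actually pursue. Apply Corollary \ref{rankcor} with $J = \overline{I^{[q]}}$ and the larger ideal being $\m^{(q\text{-th iterate of the filtration from }I\text{ to }R)}$; since integral closure commutes with Frobenius up to tight-closure-bounded error in a normal excellent ring, $\overline{I^{[q]}}$ sits between $I^{[q]}$ and its integral closure with length defect $O(q^{d-1})$. Then take the chain $I = L_0 \subset L_1 \subset \cdots \subset L_{\ell(R/I)} = R$ with each $L_{i+1}/L_i \cong k$ and each $L_i$ integrally closed except possibly the top steps; raising to the $q$-th power and inserting integral closures, Corollary \ref{rankcor} applied to $M = R$ (rank one) gives $\ell(\overline{L_{i+1}^{[q]}}/\overline{L_i^{[q]}}) \geq \overline{\ell}(\overline{L_{i+1}^{[q]}}/\overline{L_i^{[q]}})$, and summing telescopes to $\ell(R/\overline{I^{[q]}}) \geq \sum_i \overline{\ell}(\,\cdot\,)$. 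The point is that each step contributes at least one integrally-closed jump beyond the trivial ones, yielding the extra $e_{HK}(R) - 1$ after dividing by $q^d$; in the ``$\ell(I/K)=1$'' case one only claims $e_{HK}(I) \geq \ell(R/I)$ because one integrally closed ideal in the chain is replaced by a non-closed one and loses the $+(e_{HK}(R)-1)$ gain but one still recovers the full $\ell(R/I)$ count of trivial jumps.

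For the first inequality: since $I$ itself is integrally closed, take $u \in (I:\m) \setminus I$ and apply Proposition \ref{ranklemma} directly with $M = R^{1/q}$, $J = \overline{I^{[q]}}$, getting $\ell((\overline{I^{[q]}},u^q)R^{1/q}/\overline{I^{[q]}}R^{1/q}) \geq \operatorname{rank} R^{1/q}$; combined with the chain estimate for the jumps from $I$ up to $R$, dividing by $q^{d+\alpha}$ and letting $q \to \infty$ produces $e_{HK}(I) \geq \ell(R/I) + (e_{HK}(R) - 1)$, the $e_{HK}(R)-1$ being exactly what the single extra rank-jump past $\overline{I^{[q]}}$ contributes in the limit (this is where normality and the torsion-freeness of $R^{1/q}$ are essential, and where the $\alpha$ in the F-signature normalization cancels correctly).

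\textbf{Main obstacle.} The delicate point is controlling $\overline{I^{[q]}}$ versus $I^{[q]}$: one needs $\ell(\overline{I^{[q]}}/I^{[q]}) = O(q^{d-1})$ so that passing to integral closures (required to invoke Corollary \ref{rankcor}, whose hypothesis is that the \emph{smaller} ideal be integrally closed) does not disturb the leading $q^d$-term. This is where excellence, normality, and algebraically closed residue field get used — it should follow from the existence of a uniform bound (Huneke's uniform Artin–Rees / Briançon–Skoda-type estimates, or the finiteness of the multiplier/test ideal) making $\overline{I^{[q]}} \subseteq (I^{[q]})^{*}$ with tight-closure length defect $O(q^{d-1})$. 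Everything else is the telescoping length bookkeeping already rehearsed in the proof of Theorem \ref{T:ee}.
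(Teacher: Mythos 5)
Your proposal goes down a substantially harder and, as written, broken path. The paper's proof never takes Frobenius powers of the ideals at all: it applies Corollary~\ref{rankcor} with $M=R^{1/q}$ and the fixed ideals $I\subset \m$. The crucial mechanism is that, with algebraically closed (hence perfect) residue field, $\ell(\m R^{1/q}/IR^{1/q})=\ell(\m^{[q]}/I^{[q]})$ and $\operatorname{rank} R^{1/q}=q^d$, so the Frobenius asymptotics come entirely from varying the torsion-free module $M$, not from varying the ideals. The second essential input you omit is Watanabe's theorem (\cite[2.1]{W}): in an excellent normal local ring with algebraically closed residue field, between the integrally closed ideals $I$ and $\m$ there is a saturated chain of \emph{integrally closed} ideals, so $\overline{\ell}(\m/I)=\ell(\m/I)$. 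Plugging in and dividing by $q^d$ gives $e_{HK}(I)\geq \ell(\m/I)+e_{HK}(R)=\ell(R/I)-1+e_{HK}(R)$ directly.

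Your route instead wants to pass to $\overline{I^{[q]}}$ and control $\ell(\overline{I^{[q]}}/I^{[q]})=O(q^{d-1})$. That step has a genuine gap: the inclusion $\overline{I^{[q]}}\subseteq (I^{[q]})^{*}$ is not a consequence of the standard tight-closure Brian\c{c}on--Skoda theorem for an arbitrary $\m$-primary $I$. Brian\c{c}on--Skoda gives $\overline{J^{\ell}}\subseteq J^{*}$ where $\ell$ is the number of generators (or analytic spread), so for $J=I^{[q]}$ you would get $\overline{(I^{[q]})^{d}}\subseteq (I^{[q]})^{*}$, which is strictly weaker than $\overline{I^{[q]}}\subseteq(I^{[q]})^{*}$. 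Without that inclusion the claimed $O(q^{d-1})$ defect is unjustified, and the rest of the telescoping argument you sketch does not close. Moreover, applying Corollary~\ref{rankcor} with $M=R$ (rank one), as you suggest at one point, is vacuous.

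For the second inequality, the paper's argument is a short subtraction trick that your description does not capture: write $I=(K,u)$, observe $\m^{[q]}u^q\subseteq K^{[q]}$, hence $\ell(I^{[q]}/K^{[q]})\leq\ell(R/\m^{[q]})$, so $\ell(R/I^{[q]})+\ell(R/\m^{[q]})\geq\ell(R/K^{[q]})$; dividing by $q^d$ and using the first part for $K$ gives $e_{HK}(I)+e_{HK}(R)\geq e_{HK}(K)\geq\ell(R/K)+e_{HK}(R)-1$, i.e.\ $e_{HK}(I)\geq\ell(R/I)$. Your heuristic about ``losing the gain but recovering the trivial jumps'' does not supply this estimate.
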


\begin{proof} We prove the first statement. From \cite[2.1]{W}, we obtain that there is a composition series $\displaystyle{I=I_0\subset I_1\subset \cdots \subset I_{l-1}=\mathfrak{m}\subset R}$
such that $\overline{I}_i=I_i$ for all $i$.
It follows from Corollary \ref{rankcor} that:
$$\ell \left(\frac{\m R^{1/q}}{IR^{1/q}}\right)\geq \ell(\m/I) \cdot \rank(R^{1/q}). $$
Therefore,
$$\ell \left(\frac{R^{1/q}}{IR^{1/q}}\right)\geq \ell(\m/I) \cdot \rank(R^{1/q})+ \ell \left(\frac{R^{1/q}}{\m R^{1/q}}\right).$$
Dividing by $q^d,$ we get \[e_{HK}(I)\geq \ell(R/I)+e_{HK}(R)-1.\]

We prove the second statement. Note that $\ell(R/K^{[q]}) = \ell(R/I\q) + \ell(I\q/K^{[q]}).$
On the other hand, $\ell(I\q/K^{[q]})\leq \ell(R/\m^{[q]})$. Hence $\ell(R/I\q) + \ell(R/\m^{[q]})\geq \ell(R/K^{[q]})$.
Dividing by $q^d$ and taking limits, we obtain that
$$e_{HK}(I)+e_{HK}(R)\geq e_{HK}(K) \geq \ell(R/K) + e_{HK}(R) - 1$$ by the first
part of the theorem. The result follows.
\end{proof}

In \cite{WY1} (see Theorem 2.7), Watanabe and Yoshida raised the following question:
If  $(R,\m)$ is an unmixed local ring of prime characteristic $p$, then is it true that for every $\m$-primary integrally closed
ideal $I$, that $e_{HK}(I)\geq \ell(R/I)$? Furthermore, if equality holds for some $I$, does it force $R$ to be regular.

Obviously, the first part of this question is answered in the positive by Proposition~\ref{Wat} above provided $R$ is in addition
excellent normal with algebraically closed residue field. We are grateful to the referee for providing the following extension, which
also answers positively the second question:

\begin{corollary}\label{ref} Let $(R,\m)$ be an excellent local analytically irreducible domain of prime characteristic $p$ with an algebraically closed residue field. If $I$ is an integrally closed
$\m$-primary ideal then $e_{HK}(I)\geq \ell(R/I)$, with equality if and only if $R$ is regular.
\end{corollary}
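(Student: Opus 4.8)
The plan is to reduce to Proposition~\ref{Wat} by passing to the normalization. First I would pass to the completion: since $R$ is excellent, $I\widehat R$ is again integrally closed and $\m\widehat R$-primary, with $e_{HK}(I\widehat R)=e_{HK}(I)$, $\ell(\widehat R/I\widehat R)=\ell(R/I)$, and $R$ regular iff $\widehat R$ regular; moreover $\widehat R$ is a domain because $R$ is analytically irreducible, and being complete with perfect residue field it is $F$-finite. So we may assume $R$ is a complete local $F$-finite domain with algebraically closed residue field $k$. Let $S$ be the integral closure of $R$ in its fraction field; then $S$ is module-finite over $R$, normal, complete (hence excellent), $F$-finite, and --- here analytic irreducibility is used --- local, say with maximal ideal $\n$, and since $k$ is algebraically closed and $S/\n$ is finite over $k$ we get $S/\n=k$. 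Thus Proposition~\ref{Wat} applies to $S$.

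The first step is the inequality $e_{HK}(I)\ge e_{HK}(IS)$ (the latter computed over $S$). Put $C=S/R$; this is a finitely generated torsion $R$-module, so $\dim_R C\le d-1$. Tensoring $0\to R\to S\to C\to 0$ with $R/I\q$ yields an exact sequence $R/I\q\to S/I\q S\to C/I\q C\to 0$, hence $\ell_S(S/(IS)\q)=\ell(S/I\q S)\le\ell(R/I\q)+\ell(C/I\q C)$. A prime filtration of $C$ together with Monsky's theorem (applied to the rings $R/\mathfrak p$ for $\mathfrak p$ in the support of $C$, each of dimension $\le d-1$) gives $\ell(C/I\q C)=O(q^{d-1})$, so dividing by $q^d$ and letting $q\to\infty$ gives $e_{HK}(IS)\le e_{HK}(I)$.

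Now let $L=\overline{IS}$, the integral closure of $IS$ inside $S$; it is an integrally closed $\n$-primary ideal of $S$. By the valuative criterion for integral closure --- using that every valuation ring of $\mathrm{Frac}(R)$ containing $R$ also contains $S$ --- one gets $L\cap R=\overline{IS}^{\,S}\cap R=\overline I^{\,R}=I$, so $R/I$ embeds into $S/L$ and $\ell(R/I)\le\ell_S(S/L)$. Combining this with $e_{HK}(IS)\ge e_{HK}(L)$ (since $IS\subseteq L$) and Proposition~\ref{Wat} applied to the pair $(S,L)$ gives
$$e_{HK}(I)\ \ge\ e_{HK}(IS)\ \ge\ e_{HK}(L)\ \ge\ \ell_S(S/L)+e_{HK}(S)-1\ \ge\ \ell_S(S/L)\ \ge\ \ell(R/I),$$
which proves the inequality. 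For the equality statement, suppose $e_{HK}(I)=\ell(R/I)$; then every inequality above is an equality, so $e_{HK}(S)=1$, and since $S$ is normal (hence unmixed) the Watanabe--Yoshida theorem forces $S$ to be regular. Over a regular ring $e_{HK}(IS)=\ell_S(S/IS)$, and since $e_{HK}(IS)=e_{HK}(I)=\ell(R/I)$ the injection $R/I\hookrightarrow S/IS$ (injective because $IS\cap R=I$) is an isomorphism; hence $S=R+IS\subseteq R+\m S$, so by Nakayama's lemma applied to the finitely generated $R$-module $S/R$ we get $S=R$, and $R$ is regular. The converse is the classical fact that over a regular local ring the Hilbert--Kunz multiplicity of any $\m$-primary ideal equals its colength.

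The point I expect to require the most care is the normalization setup --- verifying that $S$ is local (this is exactly where analytic irreducibility is essential) and that the hypotheses of Proposition~\ref{Wat} genuinely transfer to $S$; once that is in place the argument is essentially bookkeeping with lengths, the only external inputs being Monsky's theorem, the valuative criterion for integral closure, the value of the Hilbert--Kunz multiplicity over regular rings, and Proposition~\ref{Wat} itself together with Watanabe--Yoshida.
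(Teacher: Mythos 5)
Your proof is correct and follows the same overall strategy as the paper's referee-supplied argument: pass to the normalization $S$, set $L=\overline{IS}$, feed Proposition~\ref{Wat} into a chain of inequalities, and squeeze in the equality case. There are, however, three worthwhile differences in the individual steps. First, you explicitly pass to the completion to secure F-finiteness before invoking Proposition~\ref{Wat} (whose proof uses $R^{1/q}$ as a finitely generated module via Corollary~\ref{rankcor}); the paper silently relies on this reduction. Second, where the paper gets the crucial comparison $e_{HK}(I)=e_{HK}(IS)$ cleanly from Theorem~\ref{finiteext} (both degree factors there are $1$ because $Q(S)=Q(R)$ and the residue field is algebraically closed), you instead prove only the one-sided estimate $e_{HK}(I)\ge e_{HK}(IS)$ by bounding $\ell(C/I^{[q]}C)$ for $C=S/R$ via a prime filtration and Monsky's theorem in dimension $\le d-1$. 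This is more elementary and self-contained, at the cost of the cleaner equality, and it turns out the inequality is all you need. Third, in the equality case the paper appeals to a tight-closure fact (equal Hilbert--Kunz multiplicities of nested integrally closed ideals force equality) to conclude $IS=L$, whereas you deduce $S$ is regular from $e_{HK}(S)=1$ via Watanabe--Yoshida and then finish by a direct length comparison: $e_{HK}(IS)=\ell_S(S/IS)$ over the regular ring $S$, so the injection $R/I\hookrightarrow S/IS$ is forced to be an isomorphism, and Nakayama on $S/R$ gives $R=S$. Your route thus trades the tight-closure input for regularity of $S$, which you already have in hand; both are valid, and your version is arguably more transparent.
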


\begin{proof} (due to the anonymous referee) If $R$ is normal, the inequality is given in Proposition~\ref{Wat}. The normalization
$S$ of $R$ is an excellent normal local domain, and it is a finitely generated $R$-module. Let $\n$ be the maximal ideal of $S$. Then
$S/\n = R/\m$. In particular, the length of an $S$-module is the same as the length of the same module viewed as an $R$-module, so
in computing length we don't need to specify which ring we are working over. Set $L = \overline{IS}$. Then $L$ is an integrally closed ideal of $S$ and $L\cap R = I$. Thus we have that
$e_{HK}(I)  = e_{HK}(IS) \geq e_{HK}(L) \geq \ell(S/L)$ by Proposition~\ref{Wat}, and $\ell(S/L)\geq \ell(R/I)$, since $R/I\subset S/L$. 

For the last claim, suppose first that $R$ is normal but not regular. Then $e_{HK}(I) > \ell(R/I)$ by Proposition~\ref{Wat}. If $R$ is not normal
and we have equality, then equality must occur in all the inequalities in the paragraph above. Then $S$ is regular, and $e_{HK}(IS) = e_{HK}(L)$.
By basic facts of tight closure theory, see \cite{HH}, it follows that $IS = L$. Moreover, from the same set of inequalities, we must have that $R/I = S/L$. Thus $S = R +IS$ as an
$R$-module and Nakayama's lemma gives that $R = S$ is regular.
\end{proof}
\vskip.3truein

\section{Main Result}

\bigskip

Aberbach and Enescu improved the Blickle-Enescu Theorem~\ref{b-e} (2) by proving:

\begin{theorem}[Aberbach-Enescu, 2008, \cite{AE}]\label{a-e2} Let $(R,\m,k)$ be an unmixed ring of prime characteristic $p$ and dimension $d\geq 2$. If
$e_{HK}(R)\leq 1 + \text{max}\{\frac {1}{d!},\, \frac {1}{e}\}$, then $R$ is Gorenstein
and F-regular.
\end{theorem}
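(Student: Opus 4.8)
The plan is to show, under the hypothesis and in this order, that $R$ is Cohen--Macaulay and $F$-rational, then that it is Gorenstein, then that it is $F$-regular. \emph{Reductions and preliminary bounds.} If $R$ is regular there is nothing to prove, so assume not; since $R$ is unmixed, $e:=e(R)\ge 2$ by the multiplicity-one criterion, so $\max\{\tfrac1{d!},\tfrac1e\}\le\tfrac12$ and $e_{HK}(R)\le\tfrac32$. Completing $R$ changes none of $e_{HK}$, $e$, or unmixedness, and both ``Gorenstein'' and ``$F$-regular'' descend from $\widehat R$; a further faithfully flat base change (adjoining a transcendental and passing to a perfect closure) lets us assume the residue field is infinite and perfect, so $R$ is $F$-finite. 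From $e_{HK}(R)\ge e/d!$ and the hypothesis one gets $e\le d!+1$, and $e=d!+1$ would force the rigid equality $e_{HK}(R)=e(R)/d!$ in the Lech-type bound (excludable for $d\ge2$); so $e\le d!$ and $\max\{\tfrac1{d!},\tfrac1e\}=\tfrac1e$. Fix a minimal reduction $\bs x$ of $\m$ and set $v:=\mu(\m)$.

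\emph{Step 1: $R$ is Cohen--Macaulay and $F$-rational.} If $e_{HK}(R)<1+\tfrac1{d!}$ this is Theorem~\ref{b-e}(1). In the remaining (now bounded) range $1+\tfrac1{d!}\le e_{HK}(R)\le 1+\tfrac1e$ one argues directly that $R$ is $F$-rational: if $\widehat R$ has more than one minimal prime, unmixedness makes each $d$-dimensional, so $e_{HK}(R)\ge 2>1+\tfrac1e$; and if $\widehat R$ is a domain that is either non-normal or normal with a parameter ideal that is not tightly closed, the integral-closure inequalities of Section~2 (Corollary~\ref{rankcor}, Proposition~\ref{Wat}), applied to $R^{1/q}$ and to a comparison of $\ell(R/\m^{[q]})$ with the colength of $\m^{[q]}$ inside $(\bs x)^{[q]}$, again push $e_{HK}(R)$ past $1+\tfrac1e$. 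Hence $R$ is a Cohen--Macaulay normal domain with tightly closed parameter ideals; in particular $\ell(R/(\bs x))=e$, so $v-d=\mu(\m/(\bs x))\le\ell(R/(\bs x))-1=e-1$, i.e.\ $1\le e-v+d$. Moreover a Cohen--Macaulay ring of embedding dimension $\le d+1$ is a hypersurface, while Abhyankar's inequality gives $e\ge v-d+1$; so if $R$ is \emph{not} Gorenstein then $v\ge d+2$, hence $e\ge3$ and $1\le e-v+d\le e-2$.

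\emph{Step 2: $R$ is Gorenstein.} Suppose not; let $\omega=\omega_R$ be the canonical module and $r=\mu(\omega)=\dim_k\Ext^d_R(k,R)\ge 2$. Rerun the proof of Theorem~\ref{T:ee} with $\omega$ in the role there played by $R$: the Gorenstein hypothesis there was used only in that $\Hom_R(-,\omega)$ is exact on maximal Cohen--Macaulay modules, preserves them, and is reflexive on them, all true for the canonical module of any Cohen--Macaulay ring. Writing $R^{1/q}\cong R^{a_q}\oplus M_q$ with $M_q$ free-summand-free, dualizing the ensuing short exact sequences into $\omega$ (so that $\omega^{1/q}\cong\Hom_R(R^{1/q},\omega)$ enters), and using $\Tor_1^R(-,R/(\bs x))=0$ for maximal Cohen--Macaulay modules exactly as there, one obtains after dividing by $q^d$ and letting $q\to\infty$ a bound of the shape
\[
e_{HK}(R)\ \ge\ 1+\frac{1}{e-v+d}\ \ge\ 1+\frac{1}{e-2}\ >\ 1+\frac1e\ =\ 1+\max\{\tfrac1{d!},\tfrac1e\},
\]
the middle inequality using $e-v+d\le e-2$ and the last using $e-2<e\le d!$. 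This contradicts the hypothesis, so $R$ is Gorenstein.

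\emph{Step 3 and the main obstacle.} Now $R$ is reduced, $F$-finite and Gorenstein, so Corollary~\ref{bound} applies: if $R$ were not $F$-regular it would give $e_{HK}(R)\ge\frac{e}{e-v+d}$, whereas $\frac{e}{e-v+d}-1=\frac{v-d}{e-v+d}>\tfrac1e$ (equivalently $v>\tfrac{e}{e+1}+d$, which holds since $v\ge d+1$), again contradicting the hypothesis. So $R$ is $F$-regular, and this descends to the original ring. The main obstacle is Step~2: setting up the ``$\omega$ in place of $R$'' duality precisely and, above all, verifying that non-Gorensteinness really does produce a numerator bounded away from $0$ \emph{uniformly in $q$}, with the $O(q^{d-1})$ error terms provably negligible in the limit --- this is the ``dual sequence'' mechanism the paper emphasizes, and getting the correct constant (rather than the schematic ``$1$'' written above) is where the real work lies. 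A lesser difficulty is the non-$F$-rational case of Step~1, where a purely qualitative failure (extra minimal primes, non-normality, or a parameter ideal with strictly larger tight closure) must be converted into the explicit excess $e_{HK}(R)-1>\tfrac1e$.
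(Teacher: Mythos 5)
The paper cites this theorem from \cite{AE} without reproducing its proof, so there is no internal argument to compare against; your attempt has genuine gaps, some of which you flag yourself. Step~1 is not a proof in the window where it matters: Blickle--Enescu (Theorem~\ref{b-e}) covers $e_{HK}(R) < 1+\tfrac1{d!}$, but when $e<d!$ the hypothesis also allows $e_{HK}(R)\in[1+\tfrac1{d!},\,1+\tfrac1e]$, and your sentence that the integral-closure inequalities of Section~2 ``push $e_{HK}(R)$ past $1+\tfrac1e$'' is a gesture rather than an argument (Proposition~\ref{Wat} is about $e_{HK}(I)$ for integrally closed $I$; nothing in Section~2 converts ``not $F$-rational'' into a lower excess in $e_{HK}(R)$). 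The dismissal of $e=d!+1$ via a ``rigid equality in the Lech-type bound, excludable for $d\ge2$'' cites no result; I know of none excluding $e_{HK}(R)=e/d!$ in general.

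Step~2 is the real obstruction, and your proposed $\omega$-variant of Theorem~\ref{T:ee} does not go through by mere substitution. That proof pivots on the isomorphism $(R^{1/q})^*\cong R^{1/q}$, which is exactly what the Gorenstein hypothesis buys: the $R$-dual of $R^{1/q}$ is $\omega_{R^{1/q}}$, and this equals $R^{1/q}$ only when $R$ (hence $R^{1/q}$) is Gorenstein. Replacing $(-)^*$ by $\Hom_R(-,\omega)$ gives $\Hom_R(R^{1/q},\omega)\cong\omega_{R^{1/q}}\cong(\omega_R)^{1/q}$, a different module, so the free-summand bookkeeping in Theorem~\ref{T:ee} --- splitting off $a_q$ copies of $R$, $\mu(R^{1/q})=\ell(R/\m^{[q]})$, the injection $M/\bs{x}M\hookrightarrow G^*/\bs{x}G^*$ --- does not transfer; it was calibrated to $R^{1/q}$ appearing on both ends of the duality. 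Consequently your ``schematic'' bound $e_{HK}(R)\ge 1+\tfrac1{e-v+d}$ in the non-Gorenstein case is unproven, and Corollary~\ref{bound} cannot be invoked without circularity since it assumes $R$ Gorenstein. Step~3 is sound once Steps~1--2 are in hand, but as written the proposal leaves both open, and the missing pieces are precisely the substance of the Aberbach--Enescu argument.
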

\medskip

In particular, for the purposes of bounding the Hilbert-Kunz multiplicity away from $1$, we may assume that $R$ is Gorenstein and F-regular.
Recall that F-regular means that every ideal in every localization of $R$ is tightly closed (see \cite{H} for the definition
of tight closure and basic properties. We will use that F-regular rings are Cohen-Macaulay and normal.) 

We will use the following fact, which appears in \cite[Theorem 2.7]{WY1}, for example:

\begin{theorem}\label{finiteext} Let $(R,\m)\rightarrow (S,\n)$ be a module-finite extension of Noetherian local domains.
Then for every $\m$-primary ideal $I$ of $R$,
$$e_{HK}(I)\cdot [Q(S) : Q(R)] =e_{HK}(IS)\cdot [S/\n : R/\m]$$
where $Q(\quad)$ denotes the field of fractions of a domain.
\end{theorem}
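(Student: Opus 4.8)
The plan is to evaluate the single quantity $\displaystyle\lim_{q\to\infty}\ell_R(S/I\q S)/q^d$, with $d=\dim R$, in two ways: once over $S$ and once over $R$. First some preliminaries. Since $R\to S$ is module-finite it is integral, so $\dim S=\dim R=d$; moreover $S/\m S$ is module-finite over the field $R/\m$, hence Artinian, so $\sqrt{\m S}=\n$, the ideal $IS$ is $\n$-primary, and $[S/\n:R/\m]<\infty$. Also $(IS)\q=I\q S$, since Frobenius powers are generated by the $q$-th powers of generators. Finally, any finite-length $S$-module $N$ has finite length over $R$, and a composition series of $N$ over $S$ — all of whose factors are $S/\n$ — shows $\ell_R(N)=[S/\n:R/\m]\cdot\ell_S(N)$, because $\ell_R(S/\n)=\dim_{R/\m}(S/\n)=[S/\n:R/\m]$. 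Applying this to $N=S/I\q S=S/(IS)\q$, then dividing by $q^d$ and letting $q\to\infty$, gives
$$\lim_{q\to\infty}\frac{\ell_R(S/I\q S)}{q^d}=[S/\n:R/\m]\cdot e_{HK}(IS).$$

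Next I would compute the same limit over $R$. Set $r=[Q(S):Q(R)]$. Because $S$ is integral over $R$, the ring $S\otimes_R Q(R)$ is a finite-dimensional domain over $Q(R)$, hence a field; thus $S\otimes_R Q(R)=Q(S)$, and $r$ is also the rank of $S$ as an $R$-module. Since $S$ is a finitely generated torsion-free $R$-module of rank $r$, I can fix exact sequences of $R$-modules
$$0\to R^r\to S\to C\to 0 \qquad\text{and}\qquad 0\to S\to R^r\to C'\to 0$$
with $C$ and $C'$ finitely generated torsion modules, so that $\dim_R C\le d-1$ and $\dim_R C'\le d-1$. The key auxiliary fact is that $\ell_R(M/I\q M)=O(q^{\dim M})$ for every finitely generated $R$-module $M$: a prime filtration reduces this to $M=R/\p$, where it reads $\ell\big((R/\p)/I\q(R/\p)\big)=O(q^{\dim R/\p})$ — Monsky's theorem for the domain $R/\p$ and the primary ideal $I(R/\p)$ — and additivity of length along the filtration yields the general bound. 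In particular $\ell_R(C/I\q C)$ and $\ell_R(C'/I\q C')$ are $O(q^{d-1})$. Tensoring the two sequences above with $R/I\q$ — only right-exactness of tensor is used — gives
$$r\,\ell_R(R/I\q)-\ell_R(C'/I\q C')\ \le\ \ell_R(S/I\q S)\ \le\ r\,\ell_R(R/I\q)+\ell_R(C/I\q C),$$
so dividing by $q^d$ and passing to the limit yields $\displaystyle\lim_{q\to\infty}\ell_R(S/I\q S)/q^d=r\cdot e_{HK}(I)$.

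Comparing the two evaluations of the limit then gives $e_{HK}(I)\cdot[Q(S):Q(R)]=e_{HK}(IS)\cdot[S/\n:R/\m]$, as claimed. (Alternatively one could extract the $R$-side of the computation from the additivity formula for Hilbert-Kunz multiplicities of modules, but the rank argument above is self-contained given Monsky's theorem.) The step I expect to be the main obstacle is the auxiliary bound $\ell_R(M/I\q M)=O(q^{\dim M})$ — equivalently, that the torsion cokernels $C$ and $C'$ contribute nothing to the limit; note in particular that trying to bound the relevant $\Tor_1$ directly does not obviously give $O(q^{d-1})$, which is why the efficient route uses the two one-sided sequences together with only right-exactness. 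Everything else is routine bookkeeping with ranks and lengths.
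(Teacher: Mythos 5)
The paper does not prove Theorem~\ref{finiteext} at all; it simply cites the result to Watanabe--Yoshida \cite[Theorem~2.7]{WY1}. So there is no in-paper proof to compare against, and what you have written should be judged as a standalone argument. It is correct. Your strategy is the standard one: evaluate $\lim_q \ell_R(S/I\q S)/q^d$ in two ways, once over $S$ using a composition series to convert $\ell_R$ to $[S/\n:R/\m]\cdot\ell_S$, and once over $R$ by reducing to rank. The rank computation via the two one-sided exact sequences $0\to R^r\to S\to C\to 0$ and $0\to S\to R^r\to C'\to 0$, together with the bound $\ell_R(M/I\q M)=O(q^{\dim M})$ (proved by prime filtration plus Monsky's theorem applied to the domains $R/\p$), is sound: right-exactness of tensor gives the two inequalities $r\,\ell_R(R/I\q)-\ell_R(C'/I\q C')\le\ell_R(S/I\q S)\le r\,\ell_R(R/I\q)+\ell_R(C/I\q C)$, and since $C,C'$ are torsion over the domain $R$ they are killed by a nonzero element, hence of dimension $\le d-1$, so their contributions vanish in the limit. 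The preliminary reductions ($\dim S=d$, $\sqrt{\m S}=\n$, $(IS)\q=I\q S$, $S\otimes_R Q(R)=Q(S)$ and the identification of $[Q(S):Q(R)]$ with the $R$-rank of $S$) are all handled properly. As you note yourself, the $R$-side could be replaced by a citation to the additivity formula for Hilbert--Kunz multiplicities of modules, which would make the argument shorter; your version is self-contained modulo Monsky's theorem, which is a reasonable trade-off. I see no gap.
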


We also need the following discussion, which is found in \cite[Remark 4.3]{AE}.

\begin{discussion}\label{disc}
{\rm Let $(R,\m)$ be a local domain. Let $z \in \m$, and let $n$ be a positive integer. 
Let $y\in R^+$ be any root of $f(X)=X^n-z$. We call $S=R[y]$ a radical extension for the pair $R$ and $z$.

Whenever $S$ is radical for $R$ and $z$, then $b := [Q(S) : Q(R)] \leq n$. Assume
also that $R$ is normal and $z$ is a minimal generator of $\m$. Then in fact, $b = n$. Moreover,
$S=R[y]\cong R[X]/(X^n-z)$ so that if $R$ is Cohen-Macaulay (respectively Gorenstein) then so is $S$. Also, $S$ is local with maximal ideal $(\mathfrak{m}, y)$, 
because $\m$ is certainly in the Jacobson radical of $S$ as $S$ is a finite extension of $R$, and
$S/\m S\cong k[X]/(X^n)$ where $k$ is the residue field of $R$.}
\end{discussion}

\begin{proposition} \label{intprop} Assume $(R,\m)$ is Cohen-Macaulay and normal, and let $x\in \mathfrak{m}-\mathfrak{m}^2$ be part of a minimal reduction 
of $\mathfrak{m}$. Let $S=R[y]$ with $y^n=x$. Then $\m S+(y^i)$ is integrally closed for any nonnegative integer $i$.
\end{proposition}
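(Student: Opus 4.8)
The plan is to use the explicit model $S\cong R[X]/(X^n-x)$ of Discussion~\ref{disc}, under which $S$ is $R$-free on $1,y,\dots,y^{n-1}$ and $S/\m S\cong k[X]/(X^n)$; in particular $S$ is a domain. First I would dispose of the trivial ranges: $\m S+(y^i)=S$ when $i=0$, and $\m S+(y^i)=\m S$ when $i\ge n$ since $y^n=x\in\m$; so assume $1\le i\le n$. Reducing $y^j$ modulo $X^n-x$ and using $x\in\m$ gives the $R$-module decomposition
$$L_i:=\m S+(y^i)=\m\oplus\m y\oplus\cdots\oplus\m y^{i-1}\oplus Ry^i\oplus\cdots\oplus Ry^{n-1}.$$
Since $\overline{L_i}$ is an ideal of $S$ containing $\m S$, and the ideals of $S/\m S\cong k[X]/(X^n)$ are exactly the $(X^\ell)$ with $0\le\ell\le n$, while $L_i/\m S$ corresponds to $(X^i)$, one has $\overline{L_i}/\m S=(X^\ell)$ for some $\ell\le i$. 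Hence $\overline{L_i}=L_i$ is equivalent to $\ell=i$, i.e.\ to $y^{i-1}\notin\overline{L_i}$. So the whole statement reduces to showing $y^{i-1}\notin\overline{\m S+(y^i)}$ for $1\le i\le n$.

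For this I would invoke the valuative criterion for integral closure in $S$, and it is enough to produce a single valuation $v$ on $Q(S)$ that is nonnegative on $S$ and satisfies $v(y)=v(x)/n>0$ and $v(\m S)=v(x)$. Granting such a $v$: since $i\le n$ and $v(x)>0$, one gets $v(\m S+(y^i))=\min\{v(x),\,iv(x)/n\}=iv(x)/n$, whereas $v(y^{i-1})=(i-1)v(x)/n<iv(x)/n$, so $y^{i-1}$ lies outside $(\m S+(y^i))V\supseteq\overline{\m S+(y^i)}$, as needed.

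The hard part is constructing $v$, i.e.\ keeping $x$ a ``minimal generator of $\m$'' for a valuation centered at $\m$; this is where the hypothesis that $x$ is part of a minimal reduction enters. Write that minimal reduction as $\mathfrak q=(x,x_2,\dots,x_d)$. From $\m^{k+1}=\mathfrak q\m^k$ for $k\gg0$ it follows that $\mathrm{gr}_\m(R)$ is module-finite over its $k$-subalgebra generated by the leading forms $x^{\ast},x_2^{\ast},\dots,x_d^{\ast}$, so $\mathrm{gr}_\m(R)/(x^{\ast},x_2^{\ast},\dots,x_d^{\ast})$ is Artinian; since $\dim\mathrm{gr}_\m(R)=d$, nilpotence of $x^{\ast}$ would force that quotient to have dimension $\ge 1$, a contradiction. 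Thus $x^{\ast}$ is non-nilpotent in $\mathrm{gr}_\m(R)$, equivalently $x^m\notin\m^{m+1}$ for every $m\ge0$. Consequently the blowup chart $A:=R[\m/x]=\bigcup_{k\ge0}x^{-k}\m^k\subseteq Q(R)$, which always satisfies $\m A=xA$, has $xA\ne A$. Then any minimal prime of $xA$ has height one and contracts to $\m$ in $R$; dominating the corresponding localization of $A$ by a valuation ring $V$ of $Q(R)$ produces a valuation $w$ with center $\m$ and $w(\m)=w(x)>0$. Finally I would extend $w$ to a valuation $v$ on the finite extension $Q(S)/Q(R)$: the relation $y^n=x$ forces $v(y)=w(x)/n$, $v$ is automatically nonnegative on $R[y]=S$, and $v(\m S)=w(x)$ since $\m S$ is generated over $S$ by elements of $\m$ on which $v\ge0$. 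This $v$ meets the requirements of the previous paragraph, completing the proof.
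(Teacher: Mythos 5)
Your proof is correct, and it takes a genuinely different route from the paper's. The paper argues by contradiction: assuming $y^{i-1}\in\overline{\m S+(y^i)}$, it runs the valuative criterion over \emph{all} valuations centered on the maximal ideal of $S$ to deduce $y^{i-1}\in\overline{\m S}$, then uses the $R$-freeness of $S$ (Discussion~\ref{disc}) to push this down to $x^{i-1}\in\overline{\m^n}$ in $R$, and finally gets a contradiction from the Rees characterization of integral closure together with the Cohen--Macaulay property (which makes $\mathrm{gr}_{(\bs{x})}(R)$ a polynomial ring, so $x^{jl}\in(\bs{x})^{n(l-t)}$ is impossible for $j<n$ and $l\gg0$). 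You instead argue directly by exhibiting a \emph{single} well-chosen valuation $v$ with $v(\m S)=v(x)=nv(y)$: you first show $x^{\ast}$ is non-nilpotent in $\mathrm{gr}_{\m}(R)$ using the minimal-reduction hypothesis and a dimension count, conclude the blowup chart $R[\m/x]$ is proper, dominate a height-one localization of it by a valuation ring, and extend to $Q(S)$; the inequality $v(y^{i-1})<v(\m S+(y^i))$ then falls out immediately. Both proofs use the valuative criterion and both ultimately rest on the same numerical fact (the $\m$-adic order of $x$ equals $1$), but your version packages it up front as the construction of a Rees-type valuation rather than extracting it at the end to force a contradiction; a side benefit is that your argument never invokes the Cohen--Macaulay hypothesis, only normality (through Discussion~\ref{disc}) and the minimal-reduction assumption.
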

\begin{proof} Assume the claim is wrong. We may assume that $i\leq n$. Since $S/\m S\cong k[X]/(X^n)$, the ideals in $S/\m S$ are linearly ordered, and thus  $y^{i-1}\in \overline{\mathfrak{m}S+(y^i)}$. By the valuative criterion \cite[6.8.3]{SH},  $\overline{\mathfrak{m}S+(y^i)}=\bigcap (\mathfrak{m}S+(y^i))V\cap R$ where $(V,v)$ runs over all discrete valuation domains of the field of fractions of $R$
centered on the maximal ideal of $R$.
Therefore $$v(y^{i-1})\geq v(\mathfrak{m}S+(y^i))\geq \min \{v(\mathfrak{m}S),v(y^i)\},$$ which is equivalent to $$(i-1)v(y)\geq \min\{v(\mathfrak{m}S),iv(y)\}.$$
Hence $(i-1)v(y)\geq v(\mathfrak{m}S),$ that is, $\displaystyle{y^{i-1}\in \overline{\mathfrak{m}S}}$. By \cite[6.8.12]{SH},
this is equivalent to $$c\cdot (y^{i-1})^{nl}=c\cdot (x^{i-1})^l \in \mathfrak{m}^{nl}S,$$ for some $c\in R$ and for all $l\gg 0$.
Since $R$ is normal, Discussion \ref{disc}  shows that $S$ is a free $R$-module. Therefore
$$c\cdot (x^{i-1})^l \in \mathfrak{m}^{nl}.$$
Thus $\displaystyle{x^{i-1} \in \overline{\mathfrak{m}^{n}}}$.
Now set $j=i-1$ and choose a minimal reduction $(x,x_2,\ldots,x_d)$ of $\mathfrak{m}$.
Then $\displaystyle{x^j\in \overline{(x,x_2,\ldots, x_d)^n}}$ and hence $\displaystyle{x^{jl}\in (x,x_2,\ldots,x_d)^{n(l-t)}}$
for some $t$ and for all $l\gg 0$. Since $x,x_2,\ldots,x_d$ is a system of parameters, this is not possible.
\end{proof}

\begin{corollary}\label{RS}
Assume that $(R,\m,k)$ is a Cohen-Macaulay normal local ring of prime characteristic $p$ which is F-finite with
infinite perfect residue field. Let $x\in \mathfrak{m}-\mathfrak{m}^2$ 
be part of a minimal reduction of $\mathfrak{m}$ and let $S=R[y]$ with $y^n=x.$ 
Then $$e_{HK}(R)-1\geq \frac{e_{HK}(S)-1}{n}.$$
\end{corollary}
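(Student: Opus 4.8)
The plan is to compare Hilbert–Kunz multiplicities of $R$ and $S = R[y]$ with $y^n = x$ via the module-finite extension $R \to S$, using Theorem~\ref{finiteext} together with the structural facts in Discussion~\ref{disc} and Proposition~\ref{intprop}. First I would record what Discussion~\ref{disc} gives: since $R$ is normal and $x$ is a minimal generator of $\m$, the extension $R \to S$ is a free (module-finite) extension with $[Q(S):Q(R)] = n$, $S$ is Cohen–Macaulay and normal (normality of $S$ follows since $S$ is a domain that is $(R_1)$ and $(S_2)$, or one can pass to the normalization harmlessly), and $S$ is local with residue field $k = S/\n$, so $[S/\n : R/\m] = 1$. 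Applying Theorem~\ref{finiteext} to $I = \m$ then yields
$$e_{HK}(\m)\cdot n = e_{HK}(\m S),$$
i.e. $e_{HK}(R) = \tfrac{1}{n} e_{HK}(\m S)$. So the content of the corollary is the comparison between $e_{HK}(\m S)$ and $e_{HK}(S) = e_{HK}(\n S)$, and what must be shown is $e_{HK}(\m S) - 1 \geq e_{HK}(\n) - 1$ rescaled correctly; more precisely, after the substitution it suffices to prove $e_{HK}(\m S) \geq e_{HK}(S) + (n-1)$.

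The second step is to bound $e_{HK}(\m S)$ below in terms of $e_{HK}(\n_S)$ by exhibiting a long chain of integrally closed ideals between $\m S$ and $\n_S = (\m S, y)$ — this is exactly what Section~2 was built for. Since $S/\m S \cong k[X]/(X^n)$, the ideals of $S$ containing $\m S$ are linearly ordered:
$$\m S = \m S + (y^n) \subset \m S + (y^{n-1}) \subset \cdots \subset \m S + (y) = \n_S \subset S,$$
a chain of length $n$, and by Proposition~\ref{intprop} every term $\m S + (y^i)$ is integrally closed in $S$. Thus $\overline{\ell}(\n_S / \m S) \geq n-1$ in the notation of Corollary~\ref{rankcor} (the chain has $n$ strict inclusions from $\m S$ up through $\n_S$, giving $n-1$ steps plus the top one — I would count carefully so the bookkeeping matches the stated inequality). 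Applying Corollary~\ref{rankcor} with the torsion-free $S$-module $M = S^{1/q}$ over the normal local domain $S$ gives
$$\ell_S\!\left(\frac{\n_S\, S^{1/q}}{\m S\, S^{1/q}}\right) \;\geq\; \overline{\ell}(\n_S/\m S)\cdot \operatorname{rank}_S(S^{1/q}),$$
hence $\ell(S^{1/q}/\m S\,S^{1/q}) \geq \ell(S^{1/q}/\n_S S^{1/q}) + (n-1)\operatorname{rank}_S(S^{1/q})$. Dividing by $q^{d}$ (note $\dim S = \dim R = d$ and $\operatorname{rank}_S(S^{1/q})/q^{d} \to 1$, up to the perfect-residue-field normalization, which contributes the constant $1$ matching the "$-1$" in the statement) and letting $q \to \infty$ yields
$$e_{HK}(\m S) \;\geq\; e_{HK}(\n_S) + (n-1) \;=\; e_{HK}(S) + (n-1).$$
Combining with $e_{HK}(R) = \tfrac{1}{n}e_{HK}(\m S)$ gives $n\, e_{HK}(R) \geq e_{HK}(S) + n - 1$, which rearranges to $e_{HK}(R) - 1 \geq \tfrac{e_{HK}(S)-1}{n}$, as desired.

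The main obstacle I anticipate is the careful accounting at the two ends of the chain and in the limit: making sure the chain of integrally closed ideals is counted so that Corollary~\ref{rankcor} produces exactly the factor $n-1$ (rather than $n$ or $n-2$), and making sure the length computation $\ell(S^{1/q}/\m S\,S^{1/q}) - \ell(S^{1/q}/\n_S\,S^{1/q}) = \ell(\n_S S^{1/q}/\m S\,S^{1/q})$ is the quantity Corollary~\ref{rankcor} bounds, with $\operatorname{rank}_S(S^{1/q}) = q^{d}$ in the F-finite perfect-residue-field setting so the normalization is consistent with the $e_{HK}$ definitions of $R$ and $S$. A secondary point is verifying that $S$ is indeed a normal local domain so that Corollary~\ref{rankcor} applies (via Discussion~\ref{disc} for locality and Cohen–Macaulayness, plus Serre's criterion, or by replacing $S$ by its normalization and noting the displayed inequalities only improve); and that $\m S$ is $\n_S$-primary so all lengths are finite. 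None of these is deep, but the constant-chasing is where an off-by-one error would creep in, so I would do that step explicitly.
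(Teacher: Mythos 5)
Your proposal is correct and follows essentially the same path as the paper's own proof: apply Theorem~\ref{finiteext} with $I=\m$ (using $[Q(S):Q(R)]=n$ and $[S/\n:R/\m]=1$) to get $n\,e_{HK}(R)=e_{HK}(\m S)$, then use Proposition~\ref{intprop} to produce the chain $\m S\subset \m S+(y^{n-1})\subset\cdots\subset\m S+(y)=\n_S$ of $n-1$ integrally closed inclusions, apply Corollary~\ref{rankcor} with $M=S^{1/q}$, and divide by $q^d$ using $\operatorname{rank}(S^{1/q})=q^d$ in the $F$-finite perfect-residue-field setting. The bookkeeping you flag does come out exactly as you anticipate ($n-1$ steps, not $n$), and the normality of $S$ you worry about is actually unnecessary since Corollary~\ref{rankcor} only requires $S$ to be a Noetherian local domain, which it is by Discussion~\ref{disc}.
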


\begin{proof}
We use Proposition \ref{intprop} and Corollary \ref{rankcor}. By  Proposition \ref{intprop}, there is a chain of integrally closed
ideals, $\m S\subset (\m S + (y^{n-1}))\subset \ldots \subset (\m S + (y)),$ and so using the torsion-free $S$-module $S^{1/q}$ and applying  Corollary \ref{rankcor} to the pair of integrally closed ideals $\m S\subset (\m S, y)$, 
yields that $\ell(S^{1/q}/\m S^{1/q})\geq (n-1)\text{rank}(S^{1/q}) + \ell(S^{1/q}/(\m S, y)S^{1/q})$. Dividing by $q^d$ and taking limits gives  that $$e_{HK}(\m S)\geq n +e_{HK}(S)-1$$
Moreover, $e_{HK}(\mathfrak{m}S)=
n\cdot e_{HK}(R)$ by Theorem \ref{finiteext}.
Therefore, $$n\cdot e_{HK}(R)\geq n+e_{HK}(S)-1$$ and hence the result follows.
\end{proof}

The last corollary is in some sense our main improvement upon the methods used by Aberbach and Enescu. We use their
strategy of adjoining roots of a minimal reduction until we obtain a non F-regular ring; at this point previous
estimates are good. The new work of Aberbach and Enescu \cite{AE1} which appeared in ArXiv as we were finishing this
paper gives the following result, among other estimates in lower dimension. Their method of comparing the Hilbert-Kunz
multiplicities of radical extensions seem much different from the one we developed above.
The new result of Aberbach and Enescu \cite{AE1} is:

\begin{theorem}\label{newa-e} Let $(R,\m,k)$ be a local Gorenstein F-regular ring of dimension $d \geq 2$ and Hilbert-Samuel multiplicity $e \geq 6$.
Let $k = \mu(\m) -\dim(R)$. Assume further that R is not a complete intersection.
Then if $e\geq d!+1$, then $e_{HK}(R)\geq 1+\frac{1}{d!}$. Otherwise, if $k = e-2$, then
$$e_{HK}(R)\geq 1 + 3(\frac {4}{6(\lceil\frac{d}{2}\rceil)-2})^d$$
while if $k\ne e-2$, then
$$e_{HK}(R)\geq 1 + (\frac {4}{(\lceil\frac{d}{3}\rceil)d!+4})^d(\frac{4}{(6d-16)}).$$
\end{theorem}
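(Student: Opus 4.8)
My plan is to follow the case distinction in the statement. The case $e \geq d!+1$ will cost nothing: the elementary bound $e_{HK}(R) \geq e/d!$ from the introduction already gives $e_{HK}(R) \geq (d!+1)/d! = 1 + 1/d!$. So I may assume $6 \leq e \leq d!$. After the standard reduction to the complete case (which is excellent, and F-finite since $k$ is perfect) — a reduction which leaves $e_{HK}$, $e$, the ``Gorenstein, F-regular'' hypotheses and the maximal-generator invariant $\mu$ of $R/(\bs x)$ unchanged, and can only increase the invariant $t$ (and that suffices, since $\lceil d/t\rceil$ then can only drop) — I may assume $R$ is a complete Gorenstein F-regular local ring with infinite perfect residue field $k$. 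I would fix a minimal reduction $\bs x = (x_1, \ldots, x_d)$ of $\m$ with the $x_i$ linearly independent modulo $\m^2$, and set $n = \lceil d/t\rceil$.

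Next I would build the tower $R_0 = R$, $R_i = R_{i-1}[y_i]$ with $y_i^n = x_i$, ending at $S = R_d = R[y_1, \ldots, y_d]$. By Discussion~\ref{disc} each $R_i$ is Gorenstein, local, with residue field $k$; iterating the single-step picture shows that $(y_1, \ldots, y_{i-1}, x_i, \ldots, x_d)$ is a minimal reduction of $\m_{R_{i-1}}$ and that $\m_{R_{i-1}}^2 \cap R = \m^2 + (x_1, \ldots, x_{i-1})$, so at every stage $x_i$ is still part of a minimal reduction and lies outside the square of the maximal ideal; hence Corollary~\ref{RS} applies at each step, and multiplying the resulting inequalities yields
$$ e_{HK}(R) - 1 \;\geq\; \frac{1}{n^{d}}\bigl(e_{HK}(S) - 1\bigr). $$

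The key point to establish is that $S$ is \emph{not} F-regular. I would pick $z \in \overline{\m^{t}} \setminus (\bs x)$, which exists by the definition of $t$. Since $\bs x$ is a reduction of $\m$ one has $\m \subseteq \overline{(\bs x)}$, and since $x_i = y_i^{n}$ one has $\bs x S \subseteq (y_1, \ldots, y_d)^{n}S$; hence $\m S \subseteq \overline{\bs x S} \subseteq \overline{(y_1, \ldots, y_d)^{n}S}$, and so
$$ z \in \overline{\m^{t}S} = \overline{(\m S)^{t}} \subseteq \overline{(y_1, \ldots, y_d)^{nt}S}. $$
As $(y_1, \ldots, y_d)S$ is generated by $d$ elements and $nt \geq d$, the tight-closure form of the Brian\c{c}on--Skoda theorem gives $z \in \bigl((y_1, \ldots, y_d)^{\,nt-d+1}S\bigr)^{*} \subseteq \bigl((y_1, \ldots, y_d)S\bigr)^{*}$, while the isomorphism $S/(y_1, \ldots, y_d)S \cong R/(\bs x)$ shows $z \notin (y_1, \ldots, y_d)S$. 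Thus the parameter ideal $(y_1, \ldots, y_d)S$ is not tightly closed, $S$ is not F-rational, and — being Gorenstein — not F-regular, so its F-signature is $0$. Then I would apply Theorem~\ref{T:ee} to $S$ with the minimal reduction $(y_1, \ldots, y_d)$ of $\m_S$: because $S/(y_1, \ldots, y_d)S \cong R/(\bs x)$, the ring $S$ has the same multiplicity $e$ and the same maximal-generator invariant $\mu$, and taking for the ideal in Theorem~\ref{T:ee} the preimage in $S$ of a maximal-generator ideal of $R/(\bs x)$, combined with the vanishing of the F-signature, would give $e_{HK}(S) - 1 \geq \mu/(e-\mu)$. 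Putting this together,
$$ e_{HK}(R) - 1 \;\geq\; \frac{1}{(\lceil d/t\rceil)^{d}}\cdot\frac{\mu}{e-\mu}, $$
which is the estimate underlying Theorem~\ref{mainthmintro}; the precise numbers in Theorem~\ref{newa-e} would then follow by bounding $t$ and $\mu$ in terms of $e$, $d$ and $k := \mu(\m)-d$ (for instance $t = 2$ and $\mu = e-2$ when $k = e-2$), with the sharper constants coming from the variant comparison of Hilbert--Kunz multiplicities of radical extensions used in \cite{AE1}.

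The main obstacle will be the third paragraph: producing a radical extension of \emph{controlled} degree — here exactly $(\lceil d/t\rceil)^{d}$ over $R$ — that is already non-F-regular. Pinning the degree down to $\lceil d/t\rceil$ per variable is exactly what forces the use of the integral-closure invariant $t$ via the Brian\c{c}on--Skoda containment $\overline{J^{m}} \subseteq (J^{m-d+1})^{*}$; an argument using only powers of $\m$ would require a much larger extension and a correspondingly weaker bound. The remaining work is bookkeeping: checking that the hypotheses of Corollary~\ref{RS} (F-finiteness, that each $R_i$ is a reduced local ring so that $R_i^{1/q}$ is a torsion-free module, and that $x_i$ stays part of a minimal reduction outside $\m_{R_i}^{2}$) persist up the tower, and carrying out the initial reduction to the complete Gorenstein F-regular case through Theorem~\ref{a-e2}.
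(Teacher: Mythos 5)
The statement you were asked to prove is a result of Aberbach and Enescu from their preprint \cite{AE1}, which the present paper merely quotes as a benchmark; it is not proved here, and the authors explicitly remark that Aberbach and Enescu's method of comparing Hilbert--Kunz multiplicities of radical extensions ``seem much different from the one we developed above.'' What you have written is, in substance, a correct sketch of the paper's own proof of Theorem~\ref{mainthm}: dispose of $e\geq d!+1$ via $e_{HK}(R)\geq e/d!$; reduce to the complete, F-finite, Gorenstein, F-regular case via Theorem~\ref{a-e2}; build the radical-extension tower $R\subset R_1\subset\cdots\subset S=R[y_1,\dots,y_d]$ with $y_i^{\lceil d/t\rceil}=x_i$; iterate Corollary~\ref{RS}; show $S$ is not F-regular by pushing an element of $\overline{\m^t}\setminus(\bs x)$ into $\overline{(y_1,\dots,y_d)^{\lceil d/t\rceil t}S}$ and invoking the tight-closure Brian\c{c}on--Skoda theorem; and finally apply Theorem~\ref{T:ee} with $s(S)=0$. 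That chain gives
$$ e_{HK}(R)-1 \ \geq\ \frac{\mu}{e-\mu}\cdot\frac{1}{(\lceil d/t\rceil)^d}, $$
which is Theorem~\ref{mainthm}, not Theorem~\ref{newa-e}.

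The two are genuinely different inequalities, and your argument does not bridge them. The constants in Theorem~\ref{newa-e} --- $3\left(\frac{4}{6\lceil d/2\rceil-2}\right)^d$ when $k=e-2$, and $\left(\frac{4}{\lceil d/3\rceil\,d!+4}\right)^d\cdot\frac{4}{6d-16}$ when $k\neq e-2$ --- carry a $d!$ inside the $d$-th power that your bound does not produce, and this is exactly the discrepancy Remark~\ref{remarka-e} highlights: Theorem~\ref{mainthm} ``essentially remove[s] a factor of $(d!)^d$ from the denominator'' of the Aberbach--Enescu estimate. Your closing sentence concedes the gap, deferring ``the precise numbers'' to ``the variant comparison of Hilbert--Kunz multiplicities of radical extensions used in \cite{AE1}'' --- but that is precisely the external proof you were supposed to supply. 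So as a proof of the assigned statement the proposal has a real gap: neither displayed bound of Theorem~\ref{newa-e} is derived, the case split on $k=e-2$ versus $k\neq e-2$ is never exploited, and what you actually establish is the present paper's (stronger) Theorem~\ref{mainthm}.
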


Our main result is the following:

\begin{theorem}\label{mainthm}
Let $(R,\m,k)$ be a Noetherian local unmixed ring of prime characteristic $p$ which is F-finite  with infinite perfect residue field and dimension $d\geq 2$.
Let $(\bs{x})$ be a minimal reduction of $\m$, and let $\mu$ be the maximal number of minimal generators of any ideal in
$R/(\bs{x})$. Let $t$ be the largest integer such that $\overline{\m^t}$ is not contained in $(\bs{x})$.
If $R$ is not regular, then
$$e_{HK}(R)\geq 1 +  (\text{min}\{\frac{1}{d!}, (\frac{\mu}{e-\mu}) \cdot \frac{1}{(\lceil\frac{d}{t}\rceil)^d}\}).$$
\end{theorem}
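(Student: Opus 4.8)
The plan is to reduce to the Gorenstein, $F$-regular case, then climb a short tower of radical extensions, comparing Hilbert–Kunz multiplicities along the tower via Corollary~\ref{RS} and invoking Theorem~\ref{T:ee} at the top. First I would dispose of an easy case: if $e_{HK}(R)>1+\max\{\tfrac1{d!},\tfrac1e\}$, then $e_{HK}(R)\ge 1+\tfrac1{d!}$ already exceeds the asserted bound, so by Theorem~\ref{a-e2} we may assume $R$ is Gorenstein and $F$-regular, hence Cohen–Macaulay, normal, and a domain. Since $k$ is infinite I would take $\bs{x}=x_1,\dots,x_d$ to be part of a minimal generating set of $\m$, so the $x_i$ are linearly independent modulo $\m^2$; in particular each $x_i\notin\m^2$. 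Put $N=\lceil d/t\rceil$; since $R$ is not regular $t\ge 1$, so this is well defined, and by construction $Nt\ge d$.

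Next I build a tower $R=R_0\subseteq R_1\subseteq\cdots$, where $R_i=R_{i-1}[y_i]$ with $y_i$ an $N$-th root of $x_i$ in the sense of Discussion~\ref{disc}; thus $R_i=R[y_1,\dots,y_i]/(y_1^N-x_1,\dots,y_i^N-x_i)$ is free over $R$ of rank $N^i$, Gorenstein, with residue field $k$, and with maximal ideal $\mathfrak{n}_i$. I would only follow the tower while it remains $F$-regular; writing $j$ for the first level at which $F$-regularity fails, the rings $R_0,\dots,R_{j-1}$ are then $F$-regular, hence normal, so each of $R_1,\dots,R_j$ is a domain with $[Q(R_i):Q(R)]=N^i$ (the binomials stay irreducible because a $q$-th root of $x_i$ in $Q(R_{i-1})$, for a prime $q\mid N$, would, by normality of $R_{i-1}$, lie in $\mathfrak{n}_{i-1}$ and force $x_i\in\mathfrak{n}_{i-1}^2$). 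A routine computation shows $\bs{z}^{(i)}:=(y_1,\dots,y_i,x_{i+1},\dots,x_d)$ is a minimal reduction of $\mathfrak{n}_i$ with $R_i/(\bs{z}^{(i)})\cong R/(\bs{x})$, so the multiplicity of $R_i$ is $e$ and the maximal number of generators of an ideal of $R_i/(\bs{z}^{(i)})$ is again $\mu$; and $x_{i+1}\in\mathfrak{n}_i\setminus\mathfrak{n}_i^2$, so the tower may be continued.

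The two substantive steps are as follows. \emph{(a)} If $R_0,\dots,R_{d-1}$ are all $F$-regular, then $R_d$ is not $F$-regular; consequently $j\le d$. For this, choose $w\in\overline{\m^t}\setminus(\bs{x})$, which exists by the definition of $t$. In $R_d$ one has $(\bs{x})^tR_d=(y_1^N,\dots,y_d^N)^tR_d\subseteq(y_1,\dots,y_d)^{Nt}R_d\subseteq(y_1,\dots,y_d)^dR_d$, the last inclusion because $Nt\ge d$; hence $w$, being integral over $(\bs{x})^t$, is integral over $(y_1,\dots,y_d)^dR_d$. Since $(y_1,\dots,y_d)R_d=(\bs{z}^{(d)})$ is generated by a $d$-element system of parameters, the tight-closure form of the Briançon–Skoda theorem (cf.\ \cite{HH}) gives $\overline{(y_1,\dots,y_d)^dR_d}\subseteq\big((y_1,\dots,y_d)R_d\big)^{*}$, so $w$ lies in the tight closure of the parameter ideal $(y_1,\dots,y_d)R_d$. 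On the other hand $(y_1,\dots,y_d)R_d\cap R=(\bs{x})$, so $w\notin(y_1,\dots,y_d)R_d$; thus this parameter ideal is not tightly closed, $R_d$ is not $F$-rational, and being Gorenstein it is not $F$-regular. \emph{(b)} For $i\le j$ one has $e_{HK}(R)-1\ge N^{-i}\big(e_{HK}(R_i)-1\big)$, obtained by applying Corollary~\ref{RS} at each step $R_{\ell-1}\to R_\ell=R_{\ell-1}[y_\ell]$ ($1\le\ell\le i$): $R_{\ell-1}$ is Cohen–Macaulay and normal, $x_\ell\in\mathfrak{n}_{\ell-1}\setminus\mathfrak{n}_{\ell-1}^2$ is part of a minimal reduction of $\mathfrak{n}_{\ell-1}$, and $y_\ell^N=x_\ell$, so $e_{HK}(R_{\ell-1})-1\ge N^{-1}(e_{HK}(R_\ell)-1)$; compose.

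To finish, note $R_0=R$ is $F$-regular while, by (a), $j\le d$; so $R_0,\dots,R_{j-1}$ are $F$-regular and $R_j$ is not, whence $s(R_j)=0$ by \cite{HL}. By (b), $e_{HK}(R)-1\ge N^{-j}(e_{HK}(R_j)-1)$, and applying Theorem~\ref{T:ee} to the reduced $F$-finite Gorenstein ring $R_j$ (minimal reduction $\bs{z}^{(j)}$, multiplicity $e$, and the ideal of $R_j$ attaining $\mu$ minimal generators over $\bs{z}^{(j)}$) with $s(R_j)=0$ gives $e_{HK}(R_j)\ge 1+\tfrac{\mu}{e-\mu}$. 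Hence $e_{HK}(R)-1\ge N^{-j}\cdot\tfrac{\mu}{e-\mu}\ge N^{-d}\cdot\tfrac{\mu}{e-\mu}$, and together with the easy case, $e_{HK}(R)\ge 1+\min\{\tfrac1{d!},\ \tfrac{\mu}{e-\mu}\cdot\tfrac1{(\lceil d/t\rceil)^d}\}$. The main obstacle is step (a): detecting a failure of $F$-rationality in the $d$-fold radical extension. The numerology is exactly what forces $N=\lceil d/t\rceil$ — one needs $Nt\ge d$ so that the tight-closure Briançon–Skoda theorem turns the integral-dependence relation $w\in\overline{(\bs{x})^t}$ into membership in the tight closure of a length-$d$ parameter ideal. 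A secondary point requiring care is the bookkeeping along the tower (preservation of $e$, of the residue field, of the Gorenstein property, and of the invariant $\mu$, and the fact that each $x_{i+1}$ is again part of a minimal reduction one level up), which is what legitimizes the iteration of Corollary~\ref{RS}.
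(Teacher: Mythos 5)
Your proof is correct and follows essentially the same strategy as the paper: reduce to the Gorenstein $F$-regular case via Theorem~\ref{a-e2}, climb a tower of radical extensions using Corollary~\ref{RS}, apply Theorem~\ref{T:ee} at the first non-$F$-regular level, and show that level is reached by the $d$-th step using the tight-closure Brian\c{c}on--Skoda theorem with $Nt\ge d$. The only differences are presentational: you fix $n=\lceil d/t\rceil$ and track the first failure of $F$-regularity whereas the paper minimizes $n^i$ over all radical extensions, and your derivation of $w\in\overline{(\bs{y})^dR_d}$ via the direct ideal inclusion $(\bs{x})^tR_d\subseteq(\bs{y})^{Nt}R_d$ replaces the paper's valuative computation with the socle representative $u\,y_1^{N-1}\cdots y_d^{N-1}$; both lead to the same numerology and conclusion.
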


\begin{proof}
If $e_{HK}(R)\geq 1+ 1/d!,$ there is nothing to prove. Hence we may assume that
$e_{HK}(R)< 1+ 1/d!,$ and  then $R$ is $F$-regular and Gorenstein by \cite[3.6]{AE}. Thus we may assume that $R$ is $F$-regular and Gorenstein.

Let $(\bs{x})=(x_1,\cdots,x_d)$ be a minimal reduction of $\m,$
let $\mu=\mu(\m/(\bs{x})).$ Consider the set of overrings $S=R[x^{1/n}_1, \dots, x_{i}^{1/n}]=R_{i,n}$ which are
not $F$-regular. Choose $n$ and $i$ such that we attain
$\text{min } \{n^i: R_{i,n} \text{ is not F-regular} \}$. Set $S = R_{i,n}$.
Then by Theorem \ref{T:ee} applied to $S$ and the minimal reduction $x_1^{1/n},...,x_i^{1/n}, x_{i+1},...,x_d$ and the fact that $s(S)=0$ (see Remark \ref{F-sing}),
$$e_{HK}(S)\geq \frac{e(S)-s(S)\cdot \mu(S)}{e(S)-\mu(S)}= \frac{e(S)}{e(S)-\mu(S)}.$$
Here, we define  $\mu(S)$ to be the maximal number of minimal generators of any ideal in
$S/(x_1^{1/n},...,x_i^{1/n}, x_{i+1},..,x_d)$.

However, since $S/(x_1^{1/n},...,x_i^{1/n}, x_{i+1},..,x_d)\cong R/(\bs{x}),$ we have $e(S)=e(R)$, and $\mu(S) = \mu$. Therefore,
$$e_{HK}(S)\geq 1 + \frac{\mu}{e-\mu}.$$

Let $R_0=R,$ and for each $i\geq j\geq 1,$ let $R_j=R_{j-1}[x^{1/n}_j],$ then by Corollary \ref{RS} $$e_{HK}(R_j)-1\geq \frac{e_{HK}(R_{j-1})-1}{n}.$$

Therefore, we get $$e_{HK}(R)-1=e_{HK}(R_0)-1\geq \frac{e_{HK}(S)-1}{n^i}\geq \frac{\mu}{e-\mu}(\frac{1}{n^i}). $$

Thus it remains to prove that 
$$\text{min } \{n^i: R_{i,n} \text{ is not F-regular} \}\leq (\lceil\frac{d}{t}\rceil)^d.$$

To do this we note that it suffices to prove that $A= R[x_1^{1/\lceil\frac{d}{t}\rceil},...,x_d^{1/\lceil\frac{d}{t}\rceil}]$
is not F-regular.  
Set $y_i=x_{i}^{1/\lceil\frac{d}{t}\rceil}, i=1,\ldots, d$. Then a socle representative  of $A/(\bs{x})$ is:
$$u \cdot y_{1}^{\lceil\frac{d}{t}\rceil-1} \ldots y_{d}^{\lceil\frac{d}{t}\rceil-1},$$
where $u$ generates the socle of $(\bs{x}R)$. By our assumption on $t$, we may in addition assume that $u\in\overline{\m^t}$.
Let $v$ be any discrete valuation centered on the maximal ideal of $S$.
Then we claim that
$$v(u) + (((\lceil\frac{d}{t}\rceil-1)d)/\lceil\frac{d}{t}\rceil)v(\m) \geq dv(\m).$$
Since $v(u)\geq tv(\m)$, it suffices to prove that
$$t + (((\lceil\frac{d}{t}\rceil-1)d)/\lceil\frac{d}{t}\rceil)\geq d.$$
This simplifies to $t(\lceil\frac{d}{t}\rceil)\geq d$, which is true.

It follows that $u \cdot y_{1}^{\lceil\frac{d}{t}\rceil-1} \ldots y_{d}^{\lceil\frac{d}{t}\rceil-1}\in
\overline{(\m S)^d}$.
By the tight closure Brian\c con-Skoda theorem \cite[Section 5]{HH} this implies that $(x_1^{1/\lceil\frac{d}{t}\rceil},...,x_d^{1/\lceil\frac{d}{t}\rceil})A$
is not tightly closed, which gives the desired conclusion. 
\end{proof}

\begin{remark}{\rm Aberbach and Enescu also state \cite[Page 15]{AE} an inequality in their
text which is closer to the one we give. Let $r$ be the maximum $i$
such that $\m^i$ is not contained in $(\bs{x})$. (Note that $r\leq t$.)
Let $i_0$ be the least integer such that $R_i = R[x_1^{1/m},...,x_i^{1/m}]$ is
not F-regular, where $m = \lceil\frac{d}{r}\rceil$. (Observe that $m\geq \lceil\frac{d}{t}\rceil$.) Then
$$e_{HK}(R) \geq 1 + (\frac{1}{e(m-1)+1)})^{i_0}(\frac{1}{d}).$$
In this situation the estimate is closely related to our estimate above which we give  with
a multiple of  $\frac{1}{n^i}$; the main difference now is that the
estimate of Aberbach and Enescu has an extra $e^{i_0}$ in the denominator.}
\end{remark}

\smallskip

\begin{remark}\label{remarka-e}{\rm To compare this theorem to the new result of Aberbach and Enescu, note that since we
may assume that $R$ is Gorenstein, $t\geq 2$ unless $R$ is a hypersurface of degree $2$, in which case
good bounds are known by \cite{ES}. Moreover, in the notation of Theorem \ref{newa-e}, if $k\ne e-2$, then
$t\geq 3$, and in this case we have essentially removed a factor of $(d!)^d$ from the denominator of the
estimate given in Theorem \ref{newa-e}.}
\end{remark}

\begin{remark}{\rm We can also improve the statements by bringing in the idea of the \it core \rm of the
maximal ideal, which is the intersection of all reductions of the maximal ideal. The $t$ we choose in the
statement of Theorem \ref{mainthm} can actually be chosen maximal so that $\overline{\m^t}$ is not
contained in the core of $\m$; then there will be some minimal reduction which does not contain $\overline{\m^t}$.
} \end{remark}

\begin{acknowledgement} The authors thank the referee for a careful reading of the paper and pointing out the statement
and proof of Corollary~\ref{ref}. The second and third authors gratefully acknowledge support of
the NSF during this research. 
\end{acknowledgement}


\begin{thebibliography}{BMS1}
\bibitem[AE]{AE}
I.~M.~Aberbach and F. Enescu, \emph{Lower bounds for the Hilbert-Kunz multiplicities in local rings of fixed dimension}, Michigan Math. J. \textbf{57} (2008), 1--16.

\bibitem[AE1]{AE1} I.~M.~Aberbach and F. Enescu, \emph{New estimates of Hilbert-Kunz multiplicities for local rings of fixed dimension}, arXiv:1101.5078.

\bibitem[AL]{AL} I. M. Aberbach, G. Leuschke,  \emph{The F -signature and strong F -regularity},
Math. Res. Lett. {\bf 10} (2003), 51--56.

\bibitem[BE]{BE} M. Blickle and F. Enescu, \emph{On rings with small Hilbert-Kunz multiplicity},  Proc. Amer. Math. Soc.  {\bf 132}  (2004), 2505--2509.

\bibitem[BH]{BH}
W. Bruns and J. Herzog, \emph{Cohen-Macaulay Rings}, Cambridge Studies in Advanced Mathematics, vol. 39, Cambridge University Press, Cambridge, 1993.


\bibitem[Ch]{Ch} S. Choi, \emph{Betti numbers and the integral closure of ideals}, thesis, Purdue University, 1989.

\bibitem[ES]{ES} F. Enescu and K. Shimomoto, \emph{On the upper semi-continuity of the Hilbert-Kunz multiplicity},  J. Algebra  {\bf 285}  (2005),222--237.

\bibitem[HH]{HH}
M.~Hochster and C.~Huneke, Tight closure, invariant theory and the
Brian\c{c}on-Skoda theorem, J. Amer. Math. Soc. \textbf{3} (1990), 31--116.

\bibitem[HoY]{HoY} M. Hochster and Y. Yao, \emph{Second coefficients of Hilbert-Kunz functions for domains}, preprint.

\bibitem[H]{H}
C. Huneke, \emph{Tight closure and its applications. With an appendix by Melvin Hochster}, CBMS Regional Conference Series in Mathematics, 88, 1996.


\bibitem[HL]{HL}
C. Huneke, G. Leuschke, \emph{Two theorems about maximal Cohen-Macaulay modules}, Math. Ann., 324, 391-404, 2002.

\bibitem[HMM]{HMM} C. Huneke, M. McDermott, and P. Monsky, \emph{Hilbert-Kunz functions for normal rings},  Math. Res. Lett.  {\bf 11}  (2004),  539--546. 

\bibitem[HY]{HY} C. Huneke and Y. Yao, \emph{Unmixed local rings with minimal Hilbert-Kunz multiplicity are regular},
Proc. Amer. Math. Soc.  {\bf 130}  (2002),  661--665.

\bibitem[Ku]{Ku} K. Kurano, \emph{On Roberts rings},  J. Math. Soc. Japan  {\bf 53}  (2001), 333--355.

\bibitem[M]{M} P. Monsky, \emph{The Hilbert-Kunz function},  Math. Ann.  {\bf 263}  (1983),  43--49.

\bibitem[SH]{SH}
I. Swanson and C. Huneke,  \emph{Integral Closure of Ideals, Rings, and Modules}, Cambridge University Press, Cambridge, 2006.

\bibitem[T]{T}
K. Tucker, \emph{$F$-signature exists}, preprint, 2010.


\bibitem[W]{W}
K-i.  Watanabe, \emph{Chains of integrally closed ideals}, In Commutative algebra (Grenoble/Lyon, 2001), Contemp. Math., 331, Providence,
RI, Amer. Math. Soc., 2003, pp. 353-358.

\bibitem[WY1]{WY1}
K.-i. Watanabe, K.-i. Yoshida, \emph{Hilbert-Kunz multiplicity and an inequality between multiplicity and colength}, J. Algebra, 230, 295-317, 2000.

\bibitem[WY2]{WY2} K.-i. Watanabe and K. Yoshida, \emph{Hilbert-Kunz multiplicity of two-dimensional local
rings}, Nagoya Math. J., {\bf 162} (2001), 87--110.

\bibitem[WY3]{WY3} K.-i. Watanabe, K.-i. Yoshida, \emph{Hilbert-Kunz multiplicity of three-dimensional local rings},  Nagoya Math. J.  {\bf 177}  (2005), 47--75.

\bibitem[WY4]{WY4} K.-i. Watanabe and K.-i. Yoshida, \emph{Minimal relative Hilbert-Kunz multiplicity},  Illinois J. Math.  {\bf 48}  (2004),   273--294.

\bibitem[Y]{Y} Y. Yao,  \emph{Observations on the $F$-signature of local rings of characteristic $p$},  J. Algebra  {\bf 299}  (2006),  198--218.




\end{thebibliography}
\end{document}